\numberwithin{equation}{section}
\newtheorem{theorem}{Theorem}[section]
\newtheorem{lemma}[theorem]{Lemma}
\newtheorem{proposition}[theorem]{Proposition}
\newtheorem{remark}[theorem]{Remark}
\newcommand{\RR}{\mathbb{R}}
\def\dt{\partial_t}
\def\dtt{\partial_{\tilde t}}
\def\dx{\partial_x}
\def\dz{\partial_z}
\def\dX{{\partial_X}}
\def\dXt{{\partial_{\tilde X}}}
\def\dZ{{\partial_Z}}
\def\D{\partial}
\def\e{\epsilon}
\def\tV{\tilde V}
\def\tw{\tilde w}
\newcommand{\vv}{{\bf v}}
\newcommand{\xx}{{\bf x}}
\newcommand{\barr}{\begin{array}}
\newcommand{\earr}{\end{array}}
\newcommand{\bmat}{\begin{pmatrix}}
\newcommand{\emat}{\end{pmatrix}}
\newcommand{\Div}{\mbox{div}}
\def\ind{\noindent}
\begin{document}

\title{Boundary layers interactions in the plane parallel incompressible flows}

\author{Toan Nguyen{\footnote{Division of Applied Mathematics, Brown University, 182 George street, Providence, RI 02912, USA. Email: Toan\underline{~}Nguyen@Brown.edu}} \and Franck Sueur{\footnote{Laboratoire Jacques-Louis Lions, Universit\'e Pierre et Marie Curie - Paris 6, 4 Place Jussieu, 75005 Paris, FRANCE. Email: fsueur@ann.jussieu.fr
}}\\
}

\date{July 28, 2011}

\maketitle

\begin{abstract}
We study the inviscid limit problem of the incompressible flows in the presence of both impermeable regular boundaries and a hypersurface transversal to the boundary across which the inviscid flow has a discontinuity jump. In the former case, boundary layers have been introduced by Prandtl as correctors near the boundary between the inviscid and viscous flows. In the latter case, the viscosity smoothes out the discontinuity jump by creating a transition layer which has the same amplitude and thickness as the Prandtl layer. In the neighborhood of the intersection of the impermeable boundary and of the hypersurface, interactions between the boundary and the transition layers must then be considered. In this paper, we initiate a mathematical study of this interaction and carry out a strong convergence in the inviscid limit for the case of the plane parallel flows introduced by Di Perna and Majda in \cite{DM}. 
\end{abstract}


\section{Introduction}

In this paper we are interested in the behavior of the incompressible  Navier-Stokes flow when the viscosity is small. 
This so-called inviscid limit problem is particularly difficult when the flows is contained in a domain limited by impermeable walls. 
In the standard case of a half-space, the problem reads as follows: 
\begin{equation}\label{3dNS} \begin{aligned}\dt \vv^\e + (\vv^\e \cdot \nabla) \vv^\e + \nabla p^\e &= \epsilon \Delta \vv^\e  \\
\Div \; \vv^\e &=0.
\end{aligned}
\end{equation}
Here, $\xx = (x,y,z)$ is  in $ \RR \times \RR \times (0,+\infty)$, the velocity $\vv^\e=(u^\e,v^\e,w^\e)$ is in $ \RR^3$, $p^\e$ the pressure and  $\e >0$ is the viscosity parameter. 

The equation \eqref{3dNS} is imposed with the classical no-slip boundary condition:
\begin{equation}\label{BCs} \vv^\e_{\vert _{z=0}}  = 0.\end{equation}

\bigskip
\ind
Considering the problem \eqref{3dNS}-\eqref{BCs} in the limit $\e\to 0$, 
one may hope to recover the Euler flow: the equation \eqref{3dNS} with $\e=0$, for which the natural condition on the boundary $\{z=0\}$ is 
\begin{equation}\label{BCe} w^0_{\vert _{z=0}}  = 0.\end{equation}

 Due to the difference (or rather, loss) of boundary conditions, it is common in the limit to add a boundary corrector or the so-called  Prandtl layer. This formal procedure was introduced by Prandtl in 1904, and it remains a challenging mathematical problem to circumvent the validity of this theory.  

\bigskip
\ind

Yet, some positive answers have been given in the setting of analytic flows in two dimensions by Caflish and Sammartino in \cite{CS} and improved by Cannonne, Lombardo and Sammartino  in \cite{CLS} for  inviscid flows that are analytic with respect to the tangential variables. On the other hand, when the smoothness with respect to the tangential variables is limited, 
the Prandtl layer have been shown to be unstable; see for example the papers by Grenier \cite{Gre:2000}, G\'erard-Varet and Dormy \cite{GD}, Guo and Nguyen \cite{GN}. Note that these papers also concern the $2$d case.

\bigskip
\ind 

Here, we propose to study the inviscid limit problem of a viscous incompressible  Navier-Stokes flow in presence of both a solid boundary and of a transversal discontinuity hypersurface in the limiting inviscid flow. 
The full problem is currently out of reach.
In particular, jump discontinuity across a hypersurface is also a rather unstable pattern for the incompressible Euler equations, because of the  Kelvin-Helmhotz instabilities. 
Nevertheless, when the inviscid theory is successful to provide some Euler solutions with some  jump discontinuity across a hypersurface, it is expected that the extra viscosity in the Navier-Stokes solutions smoothes out the discontinuity into a transition layer  which can be basically thought as a transmission version of the Prandtl layers. Here again, positive results are known  in an analytic framework, in $2$d, 
 see \cite{WASCOM}. 

\bigskip
\ind 
 
However, since this hypersurface is assumed here to be  transverse to the boundary, one cannot relies on  the previous results  based upon the analyticity in the transversal variables to study the interactions between the boundary layer and the transition layer. We will therefore study the layers interactions in Sobolev spaces.

\bigskip
\ind 

In this paper, we will restrict our study to {\em a  simple setting of three-dimensional incompressible flows: the plane-parallel flows}.
 They were introduced by DiPerna and Majda in \cite{DM} in order  to prove that the Euler equations are not closed under  weak limits  (in three spatial dimensions).
  These flows have also been used  as basic flows for the Euler equations by  Yudovich in \cite{yudo} to investigate stability issues 
   and  recently by  Bardos and Titi in \cite{BT} to investigate several longstanding questions including the minimal regularity needed to have well-posedness results, localization of vortex sheets on surfaces, and the energy conservation for the Euler equations.
   
  \bigskip
\ind 

Precisely, a plane-parallel solution is of the form:
\begin{equation}\label{PP}
\vv^\e (t,x,y,z) = \begin{pmatrix}u^\e(t,z)\\ v^\e(t,x,z)\\0\end{pmatrix} .
\end{equation}
Then, the Navier-Stokes system \eqref{3dNS} depletes into 
\begin{equation}\label{plane-NSeqs} 
\begin{aligned}\dt u^\e &= \e \dz^2 u^\e 
\\ \dt v^\e + u^\e \dx v^\e &= \e \Delta_{xz} v^\e ,
\end{aligned}
\end{equation}
with $p^{\e} =0$. It is thus a pressureless flow. Observe that a vector field  of the form  \eqref{PP} is divergence free.
On the other hand  the boundary conditions \eqref{BCs}  now read  
\begin{equation}\label{plane-NScds}
(u^\e,v^\e)_{\vert_{z=0}}=0,
\end{equation}
as the Dirichlet condition for the third component  is automatically satisfied for flows of the form \eqref{PP}.
The system \eqref{plane-NSeqs}-\eqref{plane-NScds} is now quite simple: the first equation in  \eqref{plane-NSeqs} is a one dimensional  heat equation whereas the second one is a two dimensional transport-diffusion equation, and for both we prescribe homogeneous Dirichlet conditions.

\bigskip
\ind

On the other hand, the Euler system, the equations \eqref{3dNS}  with  $\e=0$, depletes into 
\begin{equation}\label{plane-Euler} \begin{aligned}\dt u^0 &= 0
\\ \dt v^0 + u^0 \dx v^0 &= 0.
\end{aligned}
\end{equation}
Therefore the solution starting from the initial data 
\begin{equation*}
\vv_0 (x,z) = \begin{pmatrix}u_0 (z)\\ v_0 (x,z)\\0\end{pmatrix} 
\end{equation*}
is simply given by the formula 
\begin{equation}\label{PP^0}
\vv^0 (t,x,z) = \begin{pmatrix}u_0 (z) \\ v_0 (x - t u_0 (z) ,z) \\0\end{pmatrix} .
\end{equation}
This holds true in a quite general setting, but let us be formal for a few more lines.
For instance let us think that the function $\vv_0 $ is smooth for a while, so that there is no doubt to have about the meaning of the formula  \eqref{PP^0} nor about the fact that it solves the depleted Euler equations \eqref{plane-Euler}. 
We want to focus here first on the issue of the boundary conditions. 
 In particular, note that no boundary conditions are needed to be prescribed for the system \eqref{plane-Euler}, since any solution of the form 
 \eqref{PP} already satisfies the condition \eqref{BCe}. 
 On the other hand, if the initial data $\vv_0$ does not vanish on the boundary $z=0$ then neither does the corresponding solution $\vv^0$ given by  \eqref{PP^0} for positive times. 
 As a consequence, $\vv^0$ does not satisfy the condition \eqref{plane-NScds} and therefore  cannot be a good approximation, say in $L^\infty$, of any smooth solution  $\vv^\e$ of the system \eqref{plane-NSeqs}-\eqref{plane-NScds}.
 Yet Prandtl's theory predicts that the system \eqref{plane-NSeqs}-\eqref{plane-NScds} admit some solutions $\vv^\e$ which have the following asymptotic expansion as $\e\to 0$:
 \begin{equation}\label{exp-Pr} \vv^\e(t,x,z) \sim \vv^0(t,x,z) + \vv_P(t,x,\frac{z}{\sqrt \e}).\end{equation}
Above the profile $\vv_P (t,x,Z)$ describes a Prandtl boundary layer correction. In particular it satisfies $\vv_P (t,x,Z) \rightarrow 0$ when $Z \rightarrow +\infty$, so that this term really matters only in a layer of thickness $\sqrt \e$ near the boundary $\{z=0\}$,  and  also satisfies 
 $ \vv^0 (t,x,0) + \vv_P (t,x,0) = 0$, so that the functions in the right hand side of \eqref{exp-Pr} satisfies the boundary conditions \eqref{plane-NScds}.
The validity of this  asymptotic expansion has been verified in a recent paper of Mazzucato, Niu and Wang \cite{Maz} for regular initial data $ \vv_0$. 
In particular it follows easily from their analysis that for any  regular initial data $ \vv_0$, there exists a sequence of 
smooth solutions $\vv^\e$  of the system \eqref{plane-NSeqs}-\eqref{plane-NScds}, with some initial data conveniently chosen, such that $\vv^\e$ converges to $\vv^0$  strongly in the $L^2$ topology.

\bigskip
\ind

Here, as mentioned previously,  we are interested in the case where $\vv_0$ has a jump of discontinuity across a hypersurface. 
More precisely we assume that $u_0$ is smooth and that  $v_0$  is piecewise smooth with a jump of discontinuity across the hypersurface $\{ x = 0\}$:
\begin{equation}\label{jump-assmp}[v_0 ] _{\vert_{x=0}} := \lim_{x\to 0^+}v_0(x,z) - \lim_{x\to 0^-}v_0(x,z) \not =0.\end{equation}
We assume for simplicity that there is no jump of the normal derivative of $v_0$  across the hypersurface $\{ x = 0\}$, that is 
\begin{equation}\label{jump-assmp2}  [\dx v_0 ] _{\vert_{x=0}} =0.\end{equation}

Then it can be easily seen on the formula \eqref{PP^0} that the corresponding Euler solution $\vv^0$ is  piecewise smooth with a jump of discontinuity across the hypersurface given by the equation $\{ \Psi^0 (t,x,z) = 0\}$, with
 $$\Psi^0(t,x,z) := x -\psi (t,z), \qquad  \psi(t,z):= t u_0 (z) ,$$
Moreover taking the derivative with respect to $x$ of the  both sides of Formula  \eqref{PP^0} yields that  there is no jump of the normal derivative of $v_0$  across the hypersurface $\{ \Psi^0 (t,x,z) = 0\}$.

Such a pattern cannot hold anymore for any reasonable solutions  $\vv^\e$  of the depleted Navier-Stokes equations \eqref{plane-NSeqs}-\eqref{plane-NScds}: 
the viscosity  smoothes out  this jump of discontinuity into a transition layer 
  near the hypersurface $\{\Psi^0 = 0\}$. 
  In particular  $\vv^\e$  and its normal derivative must be continuous across the hypersurface $\{ \Psi^0  = 0\}$:
\begin{equation}\label{nojump}
[ \vv^\e ] _{\vert_{\Psi^0 = 0}}  =0 \text{ and }  [\dx  \vv^\e  ] _{\vert_{\Psi^0 = 0}} =0 .
\end{equation}
Following Prandtl's ideas it is natural to introduce a corrector 
\begin{equation*}
 \vv_{KH}(t,\frac{\Psi^0(t,x,z)}{\sqrt \e},z)
\end{equation*} 
where the profile $V_{KH} (t,x,X)$ satisfies\footnote{Here the subscript $KH$ holds for Kelvin-Helmhotz} 
\begin{equation}\label{eqs-vkhIntro}
\left\{ \begin{array}{cccc} 
&[V_{KH}]_{\vert_{X=0}} &=& -[v_0]_{x=0}
\\
&[\dX V_{KH}]_{\vert_{X=0}} &=&0,
\end{array}\right.\end{equation}
and $V_{KH}\to 0$ as $X\to \pm\infty$. 
This strategy can be seen as a transmission counterpart of the introduction of the boundary layer $\vv_P$ previously mentionned.
Actually, if the fluid domain was not limited by the boundary $\{ z = 0\}$ one could then adapt the analysis of \cite{Maz} to justify the existence of some solutions  $\vv^\e$ of \eqref{plane-NSeqs}-\eqref{plane-NScds} which admits  an expansion of the form 
\begin{equation*}
\vv^\e(t,x,z) \sim \vv^0(t,x,z) + \vv_{KH}(t,\frac{\Psi^0(t,x,z)}{\sqrt \e},z).
\end{equation*} 

Yet there is no reason for which the transition layer $\vv_{KH}$ should satisfy the boundary condition at $z=0$, nor for which the boundary layer $\vv_P$ should take care of the jump condition across $\{\Psi^0=0\}$. 
It is precisely our point to understand how to deal with both layers.

\bigskip
\ind

Our result is the following. 

\begin{theorem}\label{theo-conv} 
Let $1<p<2$ and let 
\begin{equation}\label{initial-data}
\begin{aligned}
u_0 (z) &\in H^{2} (0,+\infty)  , \\ v_{0,+} (x,z) & \in  W^{2,p} ([0,+\infty) \times (0,+\infty) ) ,  \\  v_{0,-}  (x,z) &\in  W^{2,p} ((-\infty ,0 ] \times (0,+\infty)  ) ,
\end{aligned}\end{equation}
 and  
\begin{equation}\label{def-v0}
v_{0} \in L^{p} ( \RR \times   (0,+\infty)), \qquad \mbox{with}\quad v_{0}(x,z) := \left\{ \begin{array}{lcl} v_{0,+} (x,z), \qquad & x>0 ,\\
  v_{0,-} (x,z), \qquad &x<0.
\end{array}\right. \end{equation}
Assume that $\vv_0$ satisfies the jump conditions \eqref{jump-assmp} and \eqref{jump-assmp2}. 
Let us consider  $\vv^0$ given by  the formula \eqref{PP^0}, which for any  $T>0$  is a distributional solution of  the depleted Euler equations  \eqref{plane-Euler} with $\vv^0_{\vert_{t=0}} = \vv_0$.

Then, there exist some smooth solutions $\vv^\e := (u^\e(t,z) , v^\e(t,x,z) )$ of the depleted Navier-Stokes equations \eqref{plane-NSeqs}-\eqref{plane-NScds} such that 
as $\e \to 0$, there holds the convergence
\begin{equation}\label{eqs-conv}\begin{aligned}
&\vv^\e \to \vv^0 \qquad \mbox{in } L^\infty(0,T;  L^{2} (\RR_+)   \times L^{p} ( \RR \times \RR_+)) .
\end{aligned}
\end{equation} 
\end{theorem}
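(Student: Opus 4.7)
The plan is to handle the two scalar equations in \eqref{plane-NSeqs} in cascade, building for $v^\e$ an approximate solution that carries both a Kelvin--Helmholtz transition layer along $\{\Psi^0=0\}$ and a Prandtl boundary layer along $\{z=0\}$.

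\emph{Treating $u^\e$.} The first equation in \eqref{plane-NSeqs} is the one-dimensional heat equation on $\RR_+$ with homogeneous Dirichlet condition and initial data $u_0 \in H^2(\RR_+)$. The heat-kernel representation (or, equivalently, introducing a boundary corrector $U_P(t, z/\sqrt\e)$ to absorb the trace $u_0(0)$) yields $u^\e \to u_0$ in $L^\infty(0,T; L^2(\RR_+))$ with an explicit rate, which takes care of the first component of \eqref{eqs-conv}.

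\emph{Approximate solution for $v^\e$.} Given $u^\e$, the second equation is a linear transport--diffusion equation with divergence-free transport velocity $(u^\e(t,z),0)$. I would set
\[
v^{\mathrm{app}}(t,x,z) := v^0(t,x,z) + V_{KH}\!\Bigl(t, \tfrac{\Psi^0(t,x,z)}{\sqrt \e}, z\Bigr) + V_P\!\Bigl(t,x, \tfrac{z}{\sqrt \e}\Bigr),
\]
where $V_{KH}(t,X,z)$ solves the $z$-parametrized heat equation $\dt V_{KH} = (1 + t^2 (u_0'(z))^2)\, \partial_X^2 V_{KH}$ together with the matching conditions \eqref{eqs-vkhIntro} and decay as $X\to\pm\infty$, and $V_P(t,x,Z)$ solves a Prandtl-type heat equation in $Z$ whose trace at $Z=0$ is chosen so that $(v^0 + V_{KH} + V_P)\vert_{z=0} = 0$. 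The selection of these profile equations is forced by two identities: the operator $\dt + u_0\dx$ applied to $V_{KH}(t, \Psi^0/\sqrt\e, z)$ reduces to $(\partial_t V_{KH})(t, \Psi^0/\sqrt\e, z)$, while $\e \Delta_{xz}$ applied to a function of $\Psi^0/\sqrt \e$ produces $(1 + t^2(u_0'(z))^2)\partial_X^2$ at leading order.

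\emph{$L^p$ stability.} The difference $v^{\mathrm{app}} - v^0$ is a sum of bounded profiles supported in a layer of transverse width $\sqrt\e$, so that $\|v^{\mathrm{app}} - v^0\|_{L^\infty_t L^p_{xz}} = O(\e^{1/(2p)})\to 0$. For the remainder $w^\e := v^\e - v^{\mathrm{app}}$, which satisfies a linear transport--diffusion equation with vanishing Dirichlet data at $z=0$, small initial data, and forcing $R^\e := -(\dt + u^\e \dx - \e \Delta)v^{\mathrm{app}}$, multiplying by a regularization of $|w^\e|^{p-2}w^\e$ gives the a priori estimate
\[
\|w^\e(t)\|_{L^p} \,\le\, \|w^\e(0)\|_{L^p} + \int_0^t \|R^\e(s)\|_{L^p}\,ds,
\]
because the transport contribution vanishes (as $\dx u^\e = 0$) and the dissipation has the good sign thanks to $w^\e\vert_{z=0}=0$. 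It then remains to check that the consistency error $R^\e$ tends to zero in $L^\infty_t L^p$, the bulk of which is made of classical $O(\sqrt\e)$ corrections (mismatch between $u^\e$ and $u_0$, higher-order $z$-derivatives of the Kelvin--Helmholtz profile, and the analogous contributions from $V_P$).

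\emph{Main obstacle.} The delicate step is the corner region $\{z = O(\sqrt\e)\} \cap \{\Psi^0 = O(\sqrt\e)\}$, where both correctors are of order one simultaneously and their interaction creates source terms that are not pointwise small; in particular the trace of $V_{KH}$ on $\{z=0\}$ feeds an $\e$-dependent forcing into the boundary corrector $V_P$, and conversely the trace of $V_P$ on $\{\Psi^0=0\}$ perturbs the matching there. The restriction $1<p<2$ is tailored to this step: contributions concentrated on the corner (of area $\sim \e$) are $O(\e^{1/p}) \to 0$ in $L^p$, and the hypothesis $v_{0,\pm} \in W^{2,p}$ is natural for data that are merely piecewise smooth across the jump rather than globally square-integrable. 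Separating the two layers via suitable cutoffs and carefully bookkeeping these interaction errors in $L^p$ is where the main technical work will lie.
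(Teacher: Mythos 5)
Your outline matches the paper's strategy up to the point where the real work begins, but it stops short of the paper's central construction, and the step you do write down does not survive without it. With the three-term ansatz $v^0+V_{KH}+V_P$ (the paper's \eqref{appx-solns} minus the term $V_b$), the approximate solution violates the no-slip condition \eqref{plane-NScds2} by the amount $V_{KH}(t,x/\sqrt\e,0)$, which is of amplitude $O(1)$ for $x=O(\sqrt\e)$, and violates the continuity conditions \eqref{nojump3} across the interface by the jump $[V_P]_{\vert_{x=0}}$, which is of amplitude $O(1)$ for $z=O(\sqrt\e)$. Consequently your remainder $w^\e=v^\e-v^{\mathrm{app}}$ does \emph{not} have vanishing trace at $z=0$, nor is it continuous with continuous $x$-derivative across the interface; the integration by parts behind your claimed bound $\|w^\e(t)\|_{L^p}\le\|w^\e(0)\|_{L^p}+\int_0^t\|R^\e\|_{L^p}$ then produces boundary and interface terms of order one that cannot be absorbed, and the "good sign'' of the dissipation is lost precisely in the corner you identify as the main obstacle. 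These mismatches are boundary/interface \emph{data}, not volume forcing, so the heuristic "corner has area $\sim\e$, hence $O(\e^{1/p})$ in $L^p$'' does not apply to them; cutoffs do not remove them either, they only relocate where the order-one error appears. The paper's answer is the missing ingredient: a genuine interaction profile $V_b(t,X,Z)$ in both fast variables, solving the heat-type problem \eqref{eqs-box1} with the coupling conditions \eqref{jump-Vb} (${V_b}_{\vert_{Z=0}}=-{V_{KH}}_{\vert_{z=0}}$, $[V_b]_{\vert_{X=0}}=-[V_P]_{\vert_{x=0}}$, $[\dX V_b]_{\vert_{X=0}}=0$), whose well-posedness and $W^{1,p}$ bounds (Lemmas \ref{lem-box-layer}--\ref{lem-layer-Vb}) constitute the technical core of the proof. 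Without constructing this corner layer (or an equivalent device), the proof is not complete; saying that the bookkeeping "is where the main technical work will lie'' is exactly the part that had to be done.

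Two smaller points. First, your explanation of the restriction $1<p<2$ is not the operative one: in the paper the constraint comes from the singular cross term $\e^{-1/2}\,U_P(t,z/\sqrt\e)\,\dX V_{KH}(t,x/\sqrt\e,z)$ (and its analogue with $V_b$) arising from $U_P\dx v^\e_{app}$ in the error $E^v$; its amplitude is $\e^{-1/2}$ on a corner of measure $\sim\e$, so its $L^p$ norm is $\e^{1/p-1/2}$, which tends to zero only for $p<2$ (Remark \ref{notL2}). An order-one amplitude corner contribution would be small in every $L^p$, so your "area'' argument does not explain why $p=2$ is excluded. Second, the paper straightens the interface by $\tilde x=x-tu_0(z)$ before building profiles, which makes the interface $\{x=0\}$ stationary and is what allows the clean jump conditions and the change of time variable used to solve \eqref{eqs-vkh}; your formulation with $\Psi^0/\sqrt\e$ is equivalent at the level of the profile equations but you would still need this (or a comparable) reduction to set up the corner problem and the exact remainder system.
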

\bigskip

Here $W^{2,p} $ denotes the usual Sobolev space of order $2$ associated to the Lebesgue space $ L^{p}$ and $H^{2}$ denotes the special case $H^{2} := W^{2,2} $.

\bigskip
Let us end our Introduction by giving here a few comments.

First, observe that in the statement of Theorem \ref{theo-conv} the initial data of  $\vv^\e$ is not prescribed. In the proof, we will explicitly choose them in a convenient way; in particular, it allows the boundary and transmission layers to be initially specified.
This could perhaps seem a little bit unusual at first glance, but it is in fact only technical for our convenient formulation of the main result. 
However this way to formulate our results avoids some  extra considerations regarding forcing terms and/or initial layers which do not seem essential for our purpose in the present paper. 

Finally, let us mention that we are unable to include the case $p=2$ or any $p>2$ in Theorem \ref{theo-conv}.  We will explain why in Remark \ref{notL2}.

\section{Straightened interface}

  To fix the interface, we introduce the following change of variable:
$$ \tilde x := x - \psi(t,z), \qquad   \text{ where }\psi(t,z):= t u_0 (z).$$ 
In these coordinates, the discontinuity interface is given by the equation $\tilde x=0$. 
In what follows, we drop the tilde in $\tilde x$. 
The system \eqref{plane-NSeqs} now reads
\begin{equation}\label{plane-NSE} \begin{aligned}\dt u^\e &= \e \dz^2 u^\e ,
\\
\dt v^\e + (u^\e - u_0) \dx v^\e &= \e  \Delta^\psi_{xz} v^\e  ,
\end{aligned}
\end{equation}
with 
$$ \Delta^\psi_{xz}: = \dx^2 + (\dz -\dz\psi \dx)^2 = (1+|\dz\psi|^2 )\dx^2- 2\dz\psi \D_{z,x}^2 - \dz^2 \psi\dx + \dz^2.$$

The boundary conditions \eqref{plane-NScds} do not change:
\begin{equation}\label{plane-NScds2}
(u^\e,v^\e)_{\vert_{z=0}}=0 .
\end{equation}
We are looking for some functions $ u^\e $ and  $ v^\e $  which satisfy the equations \eqref{plane-NSE} on both quadrants 
$(x,z) \in  (0,+\infty) \times (0,+\infty)$
and
$(x,z) \in  (-\infty ,0 ) \times (0,+\infty)$
with the interface conditions:
\begin{equation}\label{nojump2}
[ \vv^\e ] _{\vert_{x = 0}}  =0 \text{ and }  [\dx  \vv^\e  ] _{\vert_{x = 0}} =0 ,
\end{equation}
which correspond to the conditions \eqref{nojump} in the new variables.
Now, since $u^\e$ does not depend on $x$, the conditions \eqref{nojump2} reduce to 
\begin{equation}\label{nojump3}
[ v^\e ] _{\vert_{x = 0}}  =0 \text{ and }  [\dx  v^\e  ] _{\vert_{x = 0}} =0 .
\end{equation}
Note that if  $ u^\e $ and  $ v^\e $   are distributional solutions of \eqref{plane-NSE}  on both quadrants $(x,z) \in  (0,+\infty) \times (0,+\infty)$
and
$(x,z) \in  (-\infty ,0 ) \times (0,+\infty)$
 and satisfy the previous interface conditions then they are distributional solutions of \eqref{plane-NSE}  on  the whole half-space $ \RR \times  (0,+\infty)$.

In the limit case $\e =0$, the situation is now particularly simple: in the new coordinates the solution $\vv^0$ is stationary 
\begin{equation}\label{PP0}
\vv^0 (t,x,z) = \begin{pmatrix}u_0 (z)\\ v_0 (x,z)\\0\end{pmatrix} .
\end{equation}

Now, to prove Theorem \ref{theo-conv} it suffices to prove that there exist  some functions $ u^\e $ and  $ v^\e $  which satisfy the equations \eqref{plane-NSE} on both quadrants, satisfy the conditions \eqref{plane-NScds2} and \eqref{nojump3} and converge, 
as $\e \to 0$,  to $\vv^0$ given by \eqref{PP0} in $  L^\infty(0,T;  L^{2} (\RR_+)   \times L^{p} ( \RR \times \RR_+))$.

\section{Asymptotic expansions}

Let us now describe our strategy.
We are going to construct a family of functions of the form
\begin{equation}\label{appx-solns}
\begin{aligned}
u^\e_{app} (t,z) &= u_{0} (z) + U_P (t,\frac{z}{\sqrt \e}),\\
v^\e_{app}(t,x,z) &= v_{0} (x,z) + V_P(t,x,\frac z{\sqrt\e}) +V_{KH}(t,\frac{x}{\sqrt \e},z)
+V_{b} (t,\frac{x}{\sqrt \e}, \frac {z}{\sqrt \e}) ,
\end{aligned}
\end{equation} 
which satisfy  approximatively \eqref{plane-NSE}  on both quadrants (in a sense that we will precise in the sequel), and which satisfy the conditions  \eqref{plane-NScds2}  and  \eqref{nojump3}.

In \eqref{appx-solns}, $ (u_{0},v_{0})$ are the functions given by \eqref{PP0}.
The other functions will be defined in the sequel. 
For instance, $(U_P,V_P)$ will be a depleted Prandtl layer near the boundary, $V_{KH}$ will be a transmission layer near the discontinuity interface, and $V_b$ will aim at describing the behavior of the boundary layers interaction. 
In what follows, we will use, as in the introduction, the capitalized variables $X,Z$ to refer to $x/{\sqrt\e},z/{\sqrt\e}$, correspondingly.

Then we will prove that there exists a family of functions  $(u^\e , v^\e )$ close to  $(u^\e_{app} , v^\e_{app} )$ which  exactly satisfy \eqref{plane-NSE}  on both quadrants
and
$(x,z) \in  (-\infty ,0 ] \times (0,+\infty)$, and which still satisfy the conditions  \eqref{plane-NScds2}  and  \eqref{nojump3}.

It will remain to prove that this family $(u^\e , v^\e )$  converges to $(u_0 , v_0 )$  in $L^\infty(0,T;  L^{2} (\RR_+)   \times L^{p} ( \RR \times \RR_+))$, for $1< p<2$, to conclude the proof of  Theorem \ref{theo-conv}.

\subsection{Construction of the approximated solution}
\label{consapp}

\subsubsection{Construction of $U_{P}$}

We start by defining the function $U_{P}$, which aims at compensating the non-vanishing value of $ u_{0}$ at $z=0$. On the other hand, we want this correction to be localized near  the boundary $z=0$. We will therefore require  $U_{P}$ to satisfy 
\begin{equation}
\label{mal1}
U_{P}(t,0) = - u_{ 0}(0), \qquad \lim_{Z\to +\infty} U_P (t,Z) = 0.
\end{equation}
Now if we  put the Ansatz \eqref{appx-solns} into the system \eqref{plane-NSE} and match the order in $\e$, we get from the equation for $u^\e$ the following equation for the profile  $U_P (t,Z)$:
\begin{equation}\label{eqs-PrUp}
  \dt U_P = \partial^2_Z U_P .
  \end{equation}
We choose for $U_{P}$ the initial value 
\begin{equation}
\label{mal2}
{U_{P}} _{ \vert_{t=0}} (Z)= -  u_{ 0} (0) e^{-Z} .
\end{equation}
By  Duhamel's principle, the solution $U_p(t,Z)$ of \eqref{mal1}-\eqref{eqs-PrUp}-\eqref{mal2} satisfies 
\begin{equation*}
\begin{aligned}
U_{p} (t,Z) =& 
-  u_{ 0}(0) e^{-Z} - u_0(0) \int_{0}^{t}\int_{0}^{+\infty} \mathcal{G}(t-s,Z;Z') e^{-Z'} dZ' ds,
\end{aligned}
\end{equation*}
where $\mathcal{G}(t,Z;Z')$ denotes the one-dimensional heat kernel on the half-line:
\begin{equation}\label{heat-kernel}
\mathcal{G}(t,Z;Z') := G(t,Z-Z') - G(t,Z+Z'), \qquad G (t,Z) := \frac{1}{\sqrt{4\pi t}} e^{-\frac{Z^2}{4 t} }.
\end{equation}
Now by using the standard convolution inequality: $\|f * g\|_{L^p} \le \|f\|_{L^p} \|g\|_{L^1}$, we easily deduce that, for any $p>1$, 
\begin{equation*} 
 \| U_P  \|_{  L^\infty (0,T ; L^{p} (\RR_+ ))} \quad\le\quad C_0  | u_{ 0} (0)  | \Big[1+ \int_0^T \| G(t,\cdot)\|_{L^1} \|e^{-Z}\|_{L^p}\; ds \Big] \quad\le\quad C_0 |u_0(0)|,
\end{equation*}
for some positive constant $C_0$ that depends on $p$ and $T$. Here, we used the fact that $\| G(t,\cdot)\|_{L^1} = 1$. Similarly, using the fact that $\| \partial_ZG(t,\cdot)\|_{L^1} \sim t^{-1/2}$, we obtain
\begin{equation*}
 \| \partial_ZU_P  \|_{  L^\infty (0,T ; L^p (\RR_+ ))} \quad\le\quad C_0  | u_{ 0} (0)  |  \Big[1+ \sup_{0\le t\le T}\int_0^t  (t-s)^{-1/2} ds \Big],
\end{equation*}
which is again bounded by $ C_0 |u_0(0)|$. 

That is, we obtain the following  lemma:
 \begin{lemma} \label{lem-Up} There exists a unique solution $U_p$ to the problem \eqref{mal1}-\eqref{eqs-PrUp}-\eqref{mal2} on $[0,T]\times \RR_+$, for any $T>0$. Furthermore, for any $p>1$,  there is some positive constant $C_0$ that depends on $p$ and $T$ such that 
 \begin{equation}\label{est-PrUp}
 \|U_P  \|_{  L^\infty (0,T ; W^{1,p} (\RR_+ ))} \quad\le\quad C_0  | u_{ 0} (0)  |.
\end{equation}
\end{lemma}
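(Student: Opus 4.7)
My plan is to treat the problem \eqref{mal1}--\eqref{eqs-PrUp}--\eqref{mal2} as a one-dimensional linear heat equation on the half-line with inhomogeneous but time-independent Dirichlet data, and to solve it explicitly via Duhamel's principle. First I would lift the boundary condition by writing $U_P(t,Z) = -u_0(0)\, e^{-Z} + W(t,Z)$; since the reference profile $-u_0(0)\, e^{-Z}$ matches both the boundary value $-u_0(0)$ at $Z=0$ and the prescribed initial datum \eqref{mal2}, the remainder $W$ solves the forced heat equation $\dt W - \partial_Z^2 W = -u_0(0)\, e^{-Z}$ on $\RR_+$ with homogeneous Dirichlet condition $W|_{Z=0}=0$ and zero initial data. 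Existence and uniqueness of $W$ (and hence of $U_P$) then follow from standard linear parabolic theory on the half-line; representing $W$ via the Dirichlet heat kernel $\mathcal{G}$ of \eqref{heat-kernel} gives precisely the integral formula displayed in the excerpt.

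To obtain the $W^{1,p}$ estimate, I would bound the two summands of $U_P$ separately. The explicit term $-u_0(0)\, e^{-Z}$ lies in $W^{1,p}(\RR_+)$ for every $p>1$ with norm controlled by $|u_0(0)|$, uniformly in $t$. For the Duhamel term I would exploit the pointwise majorization $|\mathcal{G}(t,Z;Z')| \le G(t,Z-Z') + G(t,Z+Z')$, which reduces the half-line estimate to a full-line convolution after odd extension of the source across $Z=0$. Young's inequality in $Z$ then gives
$$\bigl\|\mathcal{G}(t-s,\cdot;Z') \ast e^{-Z'}\bigr\|_{L^p(\RR_+)} \,\lesssim\, \|G(t-s,\cdot)\|_{L^1(\RR)}\, \|e^{-Z'}\|_{L^p(\RR_+)},$$
and since $\|G(t,\cdot)\|_{L^1(\RR)} = 1$, integrating in $s$ yields the $L^p$ bound. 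The derivative estimate is analogous, using $\|\partial_Z G(t,\cdot)\|_{L^1(\RR)} \sim t^{-1/2}$, whose time integral $\int_0^T (T-s)^{-1/2}\, ds = 2\sqrt{T}$ is finite; summing the two contributions yields \eqref{est-PrUp}.

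There is no conceptual obstacle in this argument: it is a purely linear, one-dimensional computation. The only mildly technical point is the handling of the Dirichlet heat kernel on the half-line as opposed to the full-line Gaussian, but this is disposed of by the pointwise majorization of $\mathcal{G}$ by $G$ indicated above, after which the desired estimates reduce to Young's convolution inequality together with the elementary time integrability of $t^{-1/2}$.
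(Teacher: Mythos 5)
Your proposal is correct and follows essentially the same route as the paper: the lifting by $-u_0(0)e^{-Z}$ reproduces exactly the paper's Duhamel representation with the Dirichlet heat kernel $\mathcal{G}$, and the $W^{1,p}$ bound is obtained, as in the paper, from Young's convolution inequality together with $\|G(t,\cdot)\|_{L^1}=1$ and $\|\partial_Z G(t,\cdot)\|_{L^1}\sim t^{-1/2}$, whose time integral is finite. The only difference is that you spell out the pointwise majorization of $\mathcal{G}$ by full-line Gaussians, a detail the paper leaves implicit.
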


\subsubsection{Construction of $V_{P}$}

For $V_{P}$, the situation is the same as that for $U_{P}$, other than the fact that $V_{P}$ also depends on the variable $x$. However, $x$ only appears as  a harmless parameter. More precisely, by plugging the Ansatz \eqref{appx-solns} into the system \eqref{plane-NSE} and match the order in $\e$, we then get from the equation for $v^\e$ the profile equation for $V_P (t,x,Z)$:
\begin{equation}
\label{eqs-PrVp}
\dt V_P = \partial^2_Z V_P,\qquad {V_P}_{\vert_{Z=0}} = -{v_0(x,z)}_{\vert_{z=0}}, \qquad \lim_{Z\to +\infty} V_P = 0.
\end{equation}
Once again, we choose an initial data compatible with the boundary condition, for instance
\begin{equation}
\label{eqs-PrVpInit}
{V_P}_{\vert_{t =0}} =  - v_0 (x,0) e^{-Z} .
\end{equation}
Then as was the case for $U_p$, there exists a unique solution $V_P$  of \eqref{eqs-PrVp}-\eqref{eqs-PrVpInit} satisfying the Duhamel principle:
 \begin{equation*}
\begin{aligned}
V_{p} (t,x,Z) =& 
-  v_{ 0}(x,0) e^{-Z} - v_0(x,0) \int_{0}^{t}\int_{0}^{+\infty} \mathcal{G}(t-s,Z;Z') e^{-Z'} dZ' ds,
\end{aligned}
\end{equation*} where $\mathcal{G}(t-s,Z;Z')$ is the heat kernel defined as in \eqref{heat-kernel}. 
It is clear from this integral representation for $V_p$ that the only dependence on $x$ is due to $v_0(x,0)$.  Thus, we easily obtain the following lemma. 
 \begin{lemma} \label{lem-Vp} There exists a unique solution $V_p$ to the problem \eqref{eqs-PrVp}-\eqref{eqs-PrVpInit} on $[0,T]\times \RR_\pm\times \RR_+$, for any $T>0$. Furthermore, for any $p>1$,  there is some positive constant $C_0$ that depends on $p$ and $T$ such that 
 \begin{equation}\label{est-PrVp}
 \| \partial_x^k V_P\|_{L^\infty (0,T ; L^p (\RR_\pm ; W^{1,p} (\RR_+ )))} \quad\le\quad C_0  \| \partial_x^k v_{ 0} (\cdot,0)  \|_{L^p (\RR_\pm)}, \qquad k = 0,1,2, 
\end{equation}
and the jump of discontinuity $[V_P]_{\vert_{x=0}}$ satisfies \begin{equation}\label{est-PrVp-jump}
 \| [V_P]_{\vert_{x=0}}\|_{  L^\infty (0,T ; W^{1,p} (\RR_+ ))} \quad\le\quad C_0  | [v_{ 0} (x,0)]_{\vert_{x=0}}  |. 
\end{equation}
\end{lemma}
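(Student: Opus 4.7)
The plan is to exploit the observation made explicitly in the lemma's preamble: in problem \eqref{eqs-PrVp}-\eqref{eqs-PrVpInit} the tangential variable $x$ appears only parametrically, entering solely through the multiplicative factor $v_0(x,0)$ in the boundary and initial data. By linearity I can write
\begin{equation*}
V_P(t,x,Z) = v_0(x,0)\, F(t,Z), \qquad
F(t,Z) := -e^{-Z} - \int_0^t\!\!\int_0^{+\infty}\mathcal{G}(t-s,Z;Z')\,e^{-Z'}\,dZ'\,ds,
\end{equation*}
where $F$ solves the same one-dimensional heat problem on the half-line treated in Lemma \ref{lem-Up} (with the value $u_0(0)$ replaced by $1$). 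Existence thus reduces to verifying that this explicit formula defines a solution; uniqueness follows from the standard uniqueness theory for the heat equation on $\RR_+$ with Dirichlet data, applied at each fixed $x$.

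The estimates follow by separating variables in the norm. Reusing the Duhamel-plus-convolution computation already carried out in the proof of Lemma \ref{lem-Up}, with $\|G(t,\cdot)\|_{L^1(\RR)}=1$ and $\|\partial_Z G(t,\cdot)\|_{L^1(\RR)} \lesssim t^{-1/2}$, I would obtain
\begin{equation*}
\|F\|_{L^\infty(0,T;W^{1,p}(\RR_+))} \le C_0,
\end{equation*}
with $C_0 = C_0(p,T)$. Since differentiation in $x$ commutes with the (purely parametric) dependence on $x$, I have $\partial_x^k V_P(t,x,Z) = (\partial_x^k v_0)(x,0)\, F(t,Z)$ for $k = 0,1,2$. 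The tensor structure then yields
\begin{equation*}
\|\partial_x^k V_P(t,\cdot,\cdot)\|_{L^p(\RR_\pm;W^{1,p}(\RR_+))}
= \|\partial_x^k v_0(\cdot,0)\|_{L^p(\RR_\pm)}\,\|F(t,\cdot)\|_{W^{1,p}(\RR_+)},
\end{equation*}
and taking the supremum over $t\in[0,T]$ delivers \eqref{est-PrVp}.

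For the jump estimate \eqref{est-PrVp-jump}, the same product structure gives $[V_P]_{\vert_{x=0}}(t,Z) = [v_0(\cdot,0)]_{\vert_{x=0}} \, F(t,Z)$, so the $L^\infty(0,T;W^{1,p}(\RR_+))$ norm of the jump equals $|[v_0(\cdot,0)]_{\vert_{x=0}}|$ times $\|F\|_{L^\infty(0,T;W^{1,p}(\RR_+))}$, which is bounded by step two.

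I do not expect any serious obstacle here: the separation $V_P = v_0(x,0)F(t,Z)$ reduces everything to the one-dimensional estimate already established for $U_P$. The only mild point that requires attention is justifying that differentiation in $x$ really commutes as claimed for $k=1,2$ under the $W^{2,p}$ regularity hypotheses on $v_{0,\pm}$; this follows from the fact that the $x$-dependence is confined to the multiplicative trace $v_0(x,0)$ and that $F$ is independent of $x$, so the estimates hold whenever the right-hand side is finite, which will be ensured later by the assumptions \eqref{initial-data} on $v_{0,\pm}$ together with standard trace considerations on each half-plane $\RR_\pm\times\RR_+$.
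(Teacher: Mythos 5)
Your proposal is correct and follows essentially the same route as the paper: the paper's proof likewise uses the Duhamel representation to observe that the $x$-dependence enters only through the factor $v_0(x,0)$, obtains the one-dimensional $W^{1,p}_Z$ bound pointwise in $x$ as in Lemma \ref{lem-Up}, and then takes the $L^p$ norm in $x$, with the jump estimate handled by the same representation applied to $[v_0(\cdot,0)]_{\vert_{x=0}}$. Your explicit factorization $V_P = v_0(x,0)F(t,Z)$ is just a cleaner way of stating the same parametric dependence.
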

\begin{proof} Similarly as done for $U_p$, the integral representation for $V_p$ easily yields
\begin{equation}\label{est-PrVp-x}
 \| \partial_x^k V_P (x)\|_{  L^\infty (0,T ; W^{1,p} (\RR_+ ))} \quad\le\quad C_0  | \partial_x^k v_{ 0} (x,0)  |, \qquad k = 0,1,2, 
\end{equation}
for each nonzero $x \in \RR$. Taking the $L^p$ norm of this inequality in $x$ gives \eqref{est-PrVp} at once. The estimate for the jump of discontinuity of $V_p$ follows similarly by noting that the jump $[V_p]_{\vert_{x=0}}$ satisfies the similar integral representation to that of $V_p$.

\end{proof}

\subsubsection{Construction of $V_{KH}$}
Similarly, plugging the Ansatz \eqref{appx-solns} into the system \eqref{plane-NSE} yields the profile equation for $V_{KH}$:
\begin{equation}
\label{eqs-vkh}
\dt V_{KH}  = (1+|\dz\psi|^2) \partial_X^2  V_{KH}
,\qquad [V_{KH}]_{\vert_{X=0}} = - [v_0]_{x=0},
\qquad [\dX V_{KH}]_{\vert_{X=0}} =0.
\end{equation}
We choose the initial data:
\begin{equation}
\label{eqs-vkh-INIT}
V_{KH} {\vert_{t=0}} (X,z) =  \mp \frac{[v_0]_{x=0}}{2}  e^{\mp X} , \qquad  \pm X>0 .
\end{equation}
We will derive necessary estimates for the profile $V_{KH}$. It turns out convenient to introduce a change of variables:
$$ \tilde X = X, \qquad \tilde t = \int_0^t (1+|\dz \psi(s,z)|^2) \; ds,$$ and write $$ V_{KH}(t,X,z) = \tV_{KH}(\tilde t(t,X,z),\tilde X(t,X,z), z).$$ 
In these new variables, we then have 
\begin{equation}
\label{eqs-tvkh}
\dtt \tV_{KH}  = \partial_{\tilde X}^2  \tV_{KH}
,\qquad [\tV_{KH}]_{\vert_{\tilde X=0}} = - [v_0]_{x=0},
\qquad [\dXt \tV_{KH}]_{\vert_{\tilde X=0}} =0,
\end{equation}
with initial data
\begin{equation}
\label{eqs-tvkh-INIT}
\tV_{KH} {\vert_{\tilde t=0}} =  \mp \frac{[v_0]_{x=0}}{2}  e^{\mp \tilde X} , \qquad  \pm \tilde X>0 .
\end{equation}
The systems \eqref{eqs-tvkh} and \eqref{eqs-tvkh-INIT} are the heat equations on each half lines $\tilde X<0$ and $\tilde X>0$, with $z$ being a parameter. Thus, the Duhamel principle for the heat equation yields a candidate for $\tV_{KH}(\tilde t,\tilde X,z)$ as 
\begin{equation*}
\begin{aligned}
\tV_{KH} (\tilde t,\tilde X,z) =& 
- \frac{[v_0]_{x=0}}{2}  e^{- \tilde X}  -  \frac{[v_0]_{x=0}}{2}  \int_{0}^{\tilde t}\int_{0}^{+\infty} \mathcal{G}(\tilde t-\tilde s,\tilde X;\tilde X') e^{-\tilde X'} d\tilde X' d\tilde s,
\end{aligned}
\end{equation*}
for $\tilde X>0$, and 
\begin{equation*}
\begin{aligned}
\tV_{KH} (\tilde t,\tilde X,z) =& 
 \frac{[v_0]_{x=0}}{2}  e^{ \tilde X}  +  \frac{[v_0]_{x=0}}{2}  \int_{0}^{\tilde t}\int_{-\infty}^{0} \mathcal{G}(\tilde t-\tilde s,\tilde X;\tilde X') e^{\tilde X'} d\tilde X' d\tilde s,
\end{aligned}
\end{equation*}
with the Green function for the heat equation defined by $$\mathcal{G}(t,X;X')  = G(t,X-X') - G(t,X+X') , \qquad G(t,X) = \frac{1}{\sqrt{4\pi t}}  e^{-X^2/4t}.$$

It is straightforward to check that these definitions of $\tV_{KH}$ on $\RR_\pm \times \RR_+$ indeed satisfy the boundary and jump conditions from  \eqref{eqs-tvkh} and \eqref{eqs-tvkh-INIT}. 
 
Furthermore, similarly to those estimates obtained for $U_p$ and $V_p$, we can easily obtain 
 \begin{equation*}
 \| \dz^k \tV_{KH}(z) \|_{  L^\infty (0,\tilde T ; W^{1,p} (\RR_\pm ))} \quad\le\quad C_0  | [\dz^k v_0(x,z)]_{\vert_{x=0}} |,\qquad, k=0,1,2,
\end{equation*}
for each $z\in \RR_+$ and for some positive constant $C_0$ that depends only on $p$ and $\tilde T$. Going back to the original coordinates $(t,X)$, we have thus shown
 \begin{equation}\label{est-VKH-z}
 \|\dz^k V_{KH}(z) \|_{  L^\infty (0, T ; W^{1,p} (\RR_\pm ))} \quad\le\quad C_0  | [\dz^k v_0(x,z)]_{\vert_{x=0}} |,
\end{equation}
for $k=0,1,2$ and for each $z\in \RR_+$.   

\bigskip

Collecting these information, we obtain the following lemma.  
 \begin{lemma} \label{lem-VKH} There exists a unique solution $V_{KH}$ to the problem \eqref{eqs-vkh} and \eqref{eqs-vkh-INIT} on $[0,T]\times \RR_\pm\times \RR_+$, for any $T>0$. Furthermore, for any $p>1$,  there is some positive constant $C_0$ that depends on $p$ and $T$ such that 
 \begin{equation}\label{est-VKH} 
 \| \dz^k V_{KH}\|_{  L^\infty (0, T ; L^p(\RR_+; W^{1,p} (\RR_\pm )))} \quad\le\quad C_0  \| [\dz^k v_0(x,\cdot)]_{\vert_{x=0}} \|_{L^p(\RR_+)},
\end{equation}
for $k = 0,1,2$.
\end{lemma}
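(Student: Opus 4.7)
The plan is to derive Lemma~\ref{lem-VKH} directly from the pointwise-in-$z$ estimate \eqref{est-VKH-z} already established above. For existence, I would rely on the two explicit Duhamel formulas just written: they define $\tV_{KH}$ on $\tilde X\in\RR_\pm$ with $z$ entering only as a parameter through $[v_0]_{x=0}$, and a direct check against the heat kernel identities verifies that they solve the heat equation on each half-line with the prescribed jump and zero-flux conditions at $\tilde X=0$. Undoing the change of time variable $\tilde t = t + \tfrac{1}{3}t^{3}(u_0'(z))^{2}$ then produces $V_{KH}$ on $[0,T]\times\RR_\pm\times\RR_+$. Because $u_0\in H^{2}(\RR_+)\hookrightarrow W^{1,\infty}(\RR_+)$, the map $t\mapsto\tilde t(t,z)$ is bi-Lipschitz from $[0,T]$ onto $[0,\tilde T(z)]$ with constants uniform in $z$, so the pullback does not distort the time interval and the pointwise-in-$z$ bound transfers cleanly to the original time variable. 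Uniqueness reduces, after the same change of variables, to the classical energy estimate for the heat equation on $\RR_\pm$ with zero initial data and zero jump/flux at $\tilde X=0$.

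The core step is then to pass from the pointwise bound \eqref{est-VKH-z} to the integrated bound \eqref{est-VKH}. I would raise both sides of \eqref{est-VKH-z} to the $p$th power and integrate in $z\in\RR_+$. The right-hand side collapses immediately into $C_0^{p}\|[\dz^{k} v_0(x,\cdot)]_{\vert_{x=0}}\|_{L^{p}(\RR_+)}^{p}$, while a Fubini/Tonelli step applied to the left-hand side gives
\begin{equation*}
\|\dz^{k} V_{KH}\|_{L^\infty(0,T;\, L^{p}(\RR_+;\, W^{1,p}(\RR_\pm)))}^{p} \;\le\; \int_{\RR_+} \|\dz^{k} V_{KH}(\cdot,\cdot,z)\|_{L^\infty(0,T;\, W^{1,p}(\RR_\pm))}^{p}\,dz,
\end{equation*}
which is exactly the bound on the left-hand side of \eqref{est-VKH} we want to produce.

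The only point I expect to require care is the $z$-dependence of the time change of variables. When computing $\dz V_{KH}=\dz\tV_{KH}+(\dz\tilde t)\,\dtt\tV_{KH}$ one picks up a factor $\dz\tilde t=\tfrac{2}{3}t^{3}u_0'(z)u_0''(z)$, and the second $z$-derivative further brings in $(u_0')^{2}$, $(u_0'')^{2}$ and $u_0'''$. Since $H^{2}(\RR_+)$ controls $u_0'$ in $L^\infty$ but only $u_0''$ in $L^{2}$, these factors cannot be absorbed pointwise in $z$ into a $z$-uniform constant; they become harmless precisely because the final estimate \eqref{est-VKH} is stated with an $L^{p}(\RR_+)$ norm in $z$ on the right-hand side. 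This matching between the $H^{2}$-regularity of $u_0$ and the $L^{p}$-integration in $z$ is the main (and essentially bookkeeping) obstacle; beyond it, only standard heat-kernel convolution inequalities analogous to those already used for $U_P$ and $V_P$ are needed.
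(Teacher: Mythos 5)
Your main line is exactly the paper's proof: existence from the explicit Duhamel formulas in the straightened time variable $\tilde t$, a pointwise-in-$z$ bound of the form \eqref{est-VKH-z} with $z$ entering only through the jump $[\dz^k v_0]_{\vert_{x=0}}$, and then raising to the $p$th power and integrating in $z$, using $\sup_t\int_z\le\int_z\sup_t$ --- this is precisely the paper's ``take the $L^p$ norm in $z$ of \eqref{est-VKH-z}'' step, and your remarks on uniqueness and on the uniform-in-$z$ time change ($\tilde t(T,z)\le T+\tfrac{T^3}{3}\|u_0'\|_{L^\infty}^2$ by $H^2\hookrightarrow W^{1,\infty}$) are consistent with it.

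The one place you go beyond the paper is the chain-rule discussion, and there your proposed resolution does not work as stated. The right-hand side of \eqref{est-VKH} is $C_0\,\|[\dz^k v_0(x,\cdot)]_{\vert_{x=0}}\|_{L^p(\RR_+)}$ with $C_0=C_0(p,T)$; it contains no norm of $u_0''$ and no product terms. So the cross term $(\dz\tilde t)\,\dtt\tV_{KH}=\tfrac{2}{3}t^3u_0'(z)u_0''(z)\,\partial_{\tilde X}^2\tV_{KH}$ cannot be ``absorbed because the estimate is in $L^p_z$'': H\"older would bound it by quantities like $\|u_0'u_0''\,[v_0]_{\vert_{x=0}}\|_{L^p_z}$, which is not controlled by the stated right-hand side with a constant depending only on $p$ and $T$. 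Worse, for $k=2$ the chain rule produces $\dz^2\tilde t$, which involves $u_0'''$ --- not available for $u_0\in H^2$ --- so that route cannot close at the assumed regularity. Note that the paper's own proof never confronts these terms: it differentiates the Duhamel representation of $\tV_{KH}$ in $z$ with $(\tilde t,\tilde X)$ held fixed (where the only $z$-dependence is the multiplicative factor $[\dz^k v_0]_{\vert_{x=0}}$), and the return to the original coordinates is used only to replace the time interval in the $L^\infty_t$ norm. You have correctly spotted a real subtlety in interpreting $\dz^k V_{KH}$ for the composed function $V_{KH}(t,X,z)=\tV_{KH}(\tilde t(t,z),X,z)$, but your ``bookkeeping'' claim is a gap, not a routine step: either you estimate the parameter derivative as the paper does, or you must supply a genuine argument (and in effect a modified right-hand side and constant) for the full derivative.
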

\begin{proof}
The estimate \eqref{est-VKH} is obtained easily by taking to both sides of \eqref{est-VKH-z} the usual $L^p$ norm in $z$ and using the triangle inequality.
\end{proof}

\subsubsection{Construction of $V_{b}$}
Finally, in the ``box'' where the interactions take place, we obtain from the equation for $v^\e$ with the Ansatz \eqref{appx-solns} the following equation for the interaction profile $V_b(t,X,Z)$:
 \begin{equation}\label{eqs-box1} 
\dt V_{b}= \Delta_{XZ}^{\psi_0} V_{b} ,\qquad \Delta^{\psi_0}_{XZ} = \partial_X^2 + (\dZ -\dz\psi_{\vert_{z=0}} \dX)^2,
\end{equation} 
where 
$$\Delta^{\psi_0}_{XZ}: = \partial_X^2 + (\dZ -\dz\psi_{\vert_{z=0}} \dX)^2 ,$$ 
with the boundary and jump conditions:
\begin{equation}
\label{jump-Vb}
{V_{b}}_{\vert_{Z=0}} =- {V_{KH}}_{\vert_{z=0}},\qquad
 [V_{b}]_{\vert_{X=0}} = - [V_P]_{\vert_{x=0}} ,\qquad  [\dX V_{b}]_{\vert_{X=0}} = 0,
\end{equation}
and $V_{b}\to 0$ as $X\to \pm\infty$ or $Z\to +\infty$. Next, we choose the initial data for $V_b$:
\begin{equation}\label{initial-Vb}
{V_{b}}_{\vert_{t=0}} =  \left\{\begin{array}{rll} -  \frac12 e^{- |X|} [V_P]_{\vert_{x=0, t=0}}  , \qquad &X> 0\\
\frac12 e^{- |X|} [V_P]_{\vert_{x=0, t=0}}, \qquad &X< 0
.\end{array}\right.
\end{equation}
which satisfy all the conditions in \eqref{jump-Vb}, thanks to 
\eqref{eqs-PrVpInit}
and to 
\eqref{eqs-vkh-INIT}.

We observe at once that these boundary and jump conditions in \eqref{mal1}, \eqref{eqs-PrVp}, \eqref{eqs-vkh}, and \eqref{jump-Vb} make the Ansatz $\vv^\e_{app}$ defined as in \eqref{appx-solns} smooths out the inviscid solution $\vv^0$ (at least with $C^1$ regularity) as well as satisfy the correct no-slip boundary conditions \eqref{plane-NScds} for the depleted Navier-Stokes system. We will see in the next section that these Ansatz $\vv^\e_{app}$ indeed provide a good approximation for $\vv^\e$, and are sufficient to show the desired convergence.

We will show in this section that the profile $V_b$ exists and we then derive necessary estimates to carry out the convergence stated in the main Theorem \ref{theo-conv}. In fact, we could continue our study by employing the Green function of the heat equation on the half-space as done previously on the half-line. However, we choose to proceed the analysis by energy estimates, as it appears natural for the proof of our desired convergence.

\bigskip
\ind
To begin, it appears convenient to introduce $\tw$ through 
$$V_b =\left\{\begin{array}{rll}&\tw - \frac 12 [V_P]_{\vert_{x=0}} e^{- |X|}, \qquad &X> 0\\
&\tw + \frac 12 [V_P]_{\vert_{x=0}} e^{-| X|}, \qquad &X< 0
.\end{array}\right.$$
The function $\tw$ then solves 
\begin{equation}\label{eqs-tw}\begin{aligned}
\dt \tw &= \Delta^{\psi_0}_{XZ} \tw + J_+ e^{-|X|}, \qquad X > 0,\\
\dt \tw &= \Delta^{\psi_0}_{XZ} \tw + J_- e^{-|X|}, \qquad X < 0,
\end{aligned}\end{equation}
 with boundary and jump conditions: 
\begin{equation}\label{BC-jump-w}[\tw]_{\vert_{X=0}} =  0,\qquad  [\dX\tw]_{\vert_{X=0}} = 0, \qquad \lim_{|X|\to \infty} \tw = \lim_{Z\to \infty} \tw = 0,
\end{equation}
and 
\begin{equation}\label{BC-bdry-w} \left\{ \begin{array}{lll} \tw_{\vert_{Z=0}} =- {V_{KH}}_{\vert_{z=0}} - \frac12 e^{- |X|}[{v_0}_{\vert_{z=0}}]_{\vert_{x=0}}, \qquad &X> 0 
\\\tw_{\vert_{Z=0}} =- {V_{KH}}_{\vert_{z=0}} + \frac12 e^{- |X|}[{v_0}_{\vert_{z=0}}]_{\vert_{x=0}}, \qquad &X<0.\end{array}\right.
\end{equation}
Moreover $\tw $ vanishes at the initial time:
\begin{equation}\label{tw-initial}
\tw {\vert_{t=0}} = 0 .
\end{equation}
Here, $J_\pm$ in \eqref{eqs-tw} collects the terms involving the jumps of discontinuity. Direct calculation together with a use of \eqref{eqs-PrVp} gives
\begin{equation}\label{def-Jpm} \begin{aligned}
J_+ &=  - \frac 12 \Big((1+|\dz\psi_{\vert_{z=0}}|^2)[V_P]_{\vert_{x=0}} + 2 \dz\psi_{\vert_{z=0}} [\dZ V_P]_{\vert_{x=0}} \Big),\\
J_- &=  ~~ \frac 12 \Big((1+|\dz\psi_{\vert_{z=0}}|^2)[V_P]_{\vert_{x=0}} - 2 \dz\psi_{\vert_{z=0}} [\dZ V_P]_{\vert_{x=0}} \Big).
\end{aligned}
\end{equation} 
By applying the estimate \eqref{est-PrVp-jump} obtained in Lemma \ref{lem-Vp}, we then have
\begin{equation}\label{est-Jpm} \int_0^T \|J_\pm(t)\|^p_{W^{1,p}_Z} \; dt \quad\le\quad C_0  | [v_{ 0} (x,0)]_{\vert_{x=0}}|,
\end{equation}
for some $C_0$ that depends on $p$ and $T$. 

\bigskip
\ind
We are able to provide the following estimates. 
\begin{lemma}\label{lem-box-layer} There exists a unique solution $\tw$ to the problem \eqref{eqs-tw}--\eqref{BC-bdry-w} on $[0,T]\times \RR_\pm\times \RR_+$, for any $T>0$. Furthermore, for any $p>1$,  there is some positive constant $C_0$ that depends on $p$ and $T$ such that  there holds
\begin{equation}\label{L2est-box}
\begin{aligned}
\frac {d}{dt} \|\tw \|_{L^p_XW^{1,p}_Z}^p&+\int_{\RR^2_+}| \tw |^{p-2} |\nabla_{X,Z}^{\psi_0}\tw|^2 \; dXdZ+\int_{\RR^2_+}|\dZ \tw |^{p-2} |\nabla_{X,Z}^{\psi_0} \dZ\tw|^2 \; dXdZ
\\& \le \quad C_0\Big(1+ \|J_\pm\|_{W^{1,p}_Z} ^p + \|\tw \|_{L^p_{X}W^{1,p}_Z}^p \Big)
\end{aligned}\end{equation}
Here, $\nabla^{\psi_0}_{XZ}:=(\dX,\dZ-\dz\psi_{\vert_{z=0}}\dX)$.
\end{lemma}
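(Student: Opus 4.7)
The plan is to prove \eqref{L2est-box} by an $L^p$-energy method applied both to \eqref{eqs-tw} directly and to its $Z$-derivative. Since $\psi|_{z=0}=tu_0(0)$ depends only on $t$, the two vector fields $\dX$ and $\dZ-(\dz\psi)|_{z=0}\dX$ commute, and
\[\Delta^{\psi_0}_{XZ}=\dX^2+(\dZ-(\dz\psi)|_{z=0}\dX)^2,\qquad \nabla^{\psi_0}:=(\dX,\dZ-(\dz\psi)|_{z=0}\dX),\]
is a sum of squares of commuting vector fields, so that the viscous term has the divergence structure $\Delta^{\psi_0}_{XZ}\tw=\nabla^{\psi_0}\!\cdot\!\nabla^{\psi_0}\tw$ suitable for integration by parts.

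For the $\|\tw\|_{L^p}^p$ piece I would multiply \eqref{eqs-tw} by $|\tw|^{p-2}\tw$ (after the standard $(\tw^2+\delta)^{(p-2)/2}\tw$ regularization, needed since $1<p<2$) and integrate on each quadrant $\{\pm X>0,\ Z>0\}$ separately. The time derivative yields $\tfrac1p\tfrac{d}{dt}\|\tw\|_{L^p(\RR^2_+)}^p$, the viscous term produces the first interior dissipation $(p-1)\int|\tw|^{p-2}|\nabla^{\psi_0}\tw|^2$, and three classes of boundary contributions arise. The interface integrals at $X=0^\pm$ cancel upon summing the two quadrants, since $\tw$ and $\dX\tw$ are continuous across $X=0$ by \eqref{BC-jump-w}; the far-field contributions vanish by decay of $\tw$; and the $Z=0$ boundary produces a trace integral involving $\tw|_{Z=0}$ (prescribed by \eqref{BC-bdry-w}) and $\dZ\tw|_{Z=0}$. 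Using Lemmas \ref{lem-Vp} and \ref{lem-VKH} together with \eqref{est-PrVp-jump} and \eqref{est-VKH-z}, the prescribed Dirichlet data is uniformly bounded, and Young's inequality lets the $\dZ\tw|_{Z=0}$ factor be absorbed into a small fraction of the interior dissipation, at the cost of a term $C_0(1+\|\tw\|_{L^p_XW^{1,p}_Z}^p)$. The source term is handled by H\"older and Young:
\[\Big|\int J_\pm e^{-|X|}|\tw|^{p-2}\tw\,dXdZ\Big|\le C_0\|J_\pm\|_{L^p_Z}^p+C_0\|\tw\|_{L^p}^p.\]

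For the $\|\dZ\tw\|_{L^p}^p$ piece I would differentiate \eqref{eqs-tw} in $Z$, multiply by $|\dZ\tw|^{p-2}\dZ\tw$, and repeat. The interior dissipation becomes $(p-1)\int|\dZ\tw|^{p-2}|\nabla^{\psi_0}\dZ\tw|^2$, and the source contributes $\|J_\pm\|_{W^{1,p}_Z}^p$ via $\dZ(J_\pm e^{-|X|})=\dZ J_\pm\,e^{-|X|}$. The main obstacle is the $Z=0$ boundary term, which now involves $\dZ^2\tw|_{Z=0}$ rather than a trace of prescribed data. I would resolve this by evaluating \eqref{eqs-tw} at $Z=0$,
\[\dZ^2\tw|_{Z=0}=\dt\tw|_{Z=0}-(1+(\dz\psi|_{z=0})^2)\dX^2\tw|_{Z=0}+2\dz\psi|_{z=0}\,\dX\dZ\tw|_{Z=0}-J_\pm e^{-|X|},\]
so that $\tw|_{Z=0}$, $\dt\tw|_{Z=0}$ and $\dX^k\tw|_{Z=0}$ become $t,X$-derivatives of the prescribed Dirichlet data from \eqref{BC-bdry-w}, uniformly controlled by Lemmas \ref{lem-Vp} and \ref{lem-VKH}, while the lone residual cross trace $\dX\dZ\tw|_{Z=0}$ is absorbed into the dissipation via Young. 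Adding the two resulting differential inequalities yields \eqref{L2est-box}.
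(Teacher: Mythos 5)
Your overall strategy coincides with the paper's: coupled $L^p$ energy estimates for $\tw$ and $\dZ\tw$, cancellation of the interface contributions at $X=0$ by \eqref{BC-jump-w}, and use of the equation at $Z=0$ to rewrite the second-order trace in the $\dZ$-level boundary term. However, two of your steps would not go through as stated. First, the ``lone residual cross trace'' ${\D_{XZ}^2\tw}_{\vert_{Z=0}}$ cannot be ``absorbed into the dissipation via Young'': Young's inequality leaves you with $\|{\D_{XZ}^2\tw}_{\vert_{Z=0}}\|_{L^p_X}^p$, the trace of a \emph{second} derivative, while the available dissipation $\int_{\RR^2_+}|\dZ\tw|^{p-2}|\nabla^{\psi_0}_{XZ}\dZ\tw|^2$ only controls first derivatives of $\dZ\tw$ in the interior; any trace estimate for $\D_{XZ}^2\tw$ at $Z=0$ would require one more derivative than you have. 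The correct observation — the one the paper uses — is that this term needs no estimate at all: $\D_{XZ}^2\tw\,|\dZ\tw|^{p-2}\dZ\tw=\tfrac1p\dX\big(|\dZ\tw|^p\big)$ is a perfect $X$-derivative, so its integral over $X\in\RR$ at $Z=0$ vanishes, using $[\tw]_{\vert_{X=0}}=0$ (hence $[\dZ\tw]_{\vert_{X=0}}=0$) and the decay as $|X|\to\infty$.

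Second, after substituting the equation at $Z=0$ you assert that $\dt\tw_{\vert_{Z=0}}$ and $\dX^2\tw_{\vert_{Z=0}}$ are ``uniformly controlled by Lemmas \ref{lem-Vp} and \ref{lem-VKH}.'' Those lemmas only give $W^{1,p}$-in-$X$ bounds on $V_{KH}$ (and its $\dz^k$ derivatives), not bounds on ${\dX^2 V_{KH}}_{\vert_{z=0}}$ or ${\dt V_{KH}}_{\vert_{z=0}}$, so a term-by-term estimate of these traces is not justified by what is available. What actually saves the argument is an exact cancellation rather than an estimate: since $\tw_{\vert_{Z=0}}=-{V_{KH}}_{\vert_{z=0}}\mp\tfrac12 e^{-|X|}[{v_0}_{\vert_{z=0}}]_{\vert_{x=0}}$ and $V_{KH}$ solves $\dt V_{KH}=(1+|\dz\psi|^2)\dX^2 V_{KH}$, whose coefficient at $z=0$ is exactly $1+|\dz\psi_{\vert_{z=0}}|^2$, the combination $\dt\tw-(1+|\dz\psi_{\vert_{z=0}}|^2)\dX^2\tw-J_\pm e^{\mp X}$ at $Z=0$ collapses to $c_\pm e^{\mp X}$, which is what makes the boundary term harmless. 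A smaller point of the same kind occurs in your basic $\|\tw\|_{L^p}^p$ estimate: the trace $\|\dZ\tw_{\vert_{Z=0}}\|_{L^p_X}^p$ cannot be absorbed into the dissipation of that same estimate, $\int|\tw|^{p-2}|\nabla^{\psi_0}_{XZ}\tw|^2$; one needs the trace inequality $\|\dZ\tw_{\vert_{Z=0}}\|_{L^p_X}\lesssim\|\dZ\tw\|_{L^p_{XZ}}+\big(\int_{\RR^2_+}|\dZ\tw|^{p-2}|\dZ^2\tw|^2\,dXdZ\big)^{1/p}$, i.e.\ the dissipation produced by the $\dZ$-level estimate, and this absorption is legitimate only after the two differential inequalities are added — make that coupling explicit, since ``the interior dissipation'' as you wrote it points to the wrong term.
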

\begin{proof} By multiplying by $|\tw |^{p-2}\tw$ to the equation \eqref{eqs-tw} and integrating it over $\RR^2_+$, one has
$$\begin{aligned}
\frac1p\frac{d}{dt}\|\tw\|_{L^p_{XZ}}^p = \int_{\RR^2_+}\Big(\Delta^{\psi_0}_{XZ} \tw + J_\pm e^{\mp X}\Big)|\tw |^{p-2} \tw \;dXdZ.
\end{aligned}$$
For the term involving $J_\pm$, the standard H\"older's inequality gives 
$$\int_{\RR^2_+}e^{\mp X}J_\pm |\tw |^{p-1}\;dXdZ \quad\lesssim\quad \|\tw \|_{L^p_{XZ}}^{p-1} \|J_\pm\|_{L^p_Z}. $$ 
Here and in what follows, by $f\lesssim g$ we always mean that $f \le C_0 g$, for some positive constant $C_0$ that only depends on $p$ and $T$. 
Now, integration by parts yields 
$$\begin{aligned}
\int_{\RR^2_+}\Delta^{\psi_0}_{XZ}& \tw |\tw |^{p-1}\tw \;dXdZ 
\\=& -(p-1)\int_{\RR^2_+} |\tw|^{p-2}|\nabla^{\psi_0}_{XZ} \tw|^2 \;dXdZ - \int_{\RR}  (\dZ-\dz\psi_{\vert_{z=0}}\dX)\tw |\tw|^{p-2}\tw_{\vert_{Z=0}} \;dX
\\&- \int_{\RR_+}\Big( [\dX \tw |\tw|^{p-2}\tw - \dz\psi_{\vert_{z=0}} (\dZ-\dz\psi_{\vert_{z=0}}\dX)\tw |\tw|^{p-2} \tw]_{\vert_{X=0}} \Big)\;dZ
\end{aligned}$$
in which the last term on the right-hand side vanishes due to the jump conditions \eqref{BC-jump-w} on $\tw$ and on $\dX\tw$. Thus, we obtain
$$\begin{aligned}
\int_{\RR^2_+}\Delta^{\psi_0}_{XZ} \tw \tw \;dXdZ 
=& -(p-1)\int_{\RR^2_+}|\tw|^{p-2}|\nabla^{\psi_0}_{XZ} \tw|^2 \;dXdZ - \int_{\RR}  \dZ\tw |\tw|^{p-2}\tw_{\vert_{Z=0}} \;dX.
\end{aligned}$$

\bigskip
\ind
Collecting, we have shown 
\begin{equation}\label{L2-est-tw}\begin{aligned}
\frac{d}{dt}\|\tw\|_{L^p_{XZ}}^p
&+ p(p-1) \int_{\RR^2_+} |\tw|^{p-2}|\nabla_{X,Z}^{\psi_0} \tw |^2 \; dXdZ \\&\lesssim\quad  \|\tw \|_{L^p_{XZ}}^{p-1} \|J_\pm\|_{L^p_Z}  - \int_{\RR}  \dZ\tw |\tw|^{p-2}\tw_{\vert_{Z=0}} \;dX.
\end{aligned}\end{equation}
For the boundary term, the Young's inequality yields
$$ \int_{\RR}  |\dZ\tw |\tw|^{p-2}\tw_{\vert_{Z=0}} |\;dX \quad\lesssim\quad \| \tw_{\vert_{Z=0}}\|_{L^p_X} ^p + \| \dZ \tw_{\vert_{Z=0}}\|^p_{L^p_X}.$$
The first boundary term can be easily treated by the trace inequality. We treat the second boundary term by the $H_Z^1$ energy estimate. To this end, we take $Z$-derivative of the equation \eqref{eqs-tw} and multiply by $|\dZ \tw|^{p-2} \dZ \tw$ to the resulting equation. We simply get 
$$\begin{aligned}
\frac1p\frac{d}{dt}\|\dZ\tw\|_{L^p_{XZ}}^p&= \int_{\RR^2_+}\Big(\Delta^{\psi_0}_{XZ} \dZ \tw + \dZ J_\pm e^{\mp X}\Big)|\dZ \tw|^{p-2} \dZ\tw \;dXdZ.
\end{aligned}$$
Again, by applying the H\"older's inequality to the last term on the right-hand side, we have 
$$\begin{aligned}
\int_{\RR^2_+} \dZ J_\pm e^{\mp X}|\dZ \tw|^{p-2} \dZ\tw \;dXdZ \quad\lesssim\quad \|\dZ J_\pm\|_{L^p_Z}\|\dZ \tw\|_{L^p_{XZ}}^{p-1} .
\end{aligned}$$
Next, the integration by parts yields
$$\begin{aligned}
\int_{\RR^2_+}&\Delta^{\psi_0}_{XZ} |\dZ \tw|^{p-2}\dZ\tw \dZ\tw \;dXdZ 
\\=& -(p-1)\int_{\RR^2_+} |\dZ \tw|^{p-2} |\nabla^{\psi_0}_{XZ} \dZ\tw|^2 \;dXdZ - \int_{\RR}  (\dZ-\dz\psi_{\vert_{z=0}}\dX)\dZ\tw |\dZ \tw|^{p-2}\dZ\tw_{\vert_{Z=0}} \;dX
\\&- \int_{\RR_+}\Big( [\D_{XZ}^2\tw |\dZ \tw|^{p-2}\dZ\tw - \dz\psi_{\vert_{z=0}} (\dZ-\dz\psi_{\vert_{z=0}}\dX)\dZ\tw |\dZ \tw|^{p-2}\dZ\tw]_{\vert_{X=0}} \Big)\;dZ
\end{aligned}$$
in which again the last term on the right-hand side vanishes due to the jump condition \eqref{BC-jump-w}. By using the equation for $\tw$, we can write the boundary term as
$$\begin{aligned}
- &\int_{\RR}  (\dZ-\dz\psi_{\vert_{z=0}}\dX)\dZ\tw |\dZ \tw|^{p-2}\dZ\tw_{\vert_{Z=0}} \;dX 
\\&= - \int_{\RR} \Big( (\dZ-\dz\psi_{\vert_{z=0}}\dX)^2\tw  + \dz\psi_{\vert_{z=0}} (\dZ-\dz\psi_{\vert_{z=0}}\dX)\dX \tw \Big)|\dZ \tw|^{p-2}\dZ\tw_{\vert_{Z=0}} \;dX
\\&= - \int_{\RR} \Big(\dt \tw  - (1+|\dz\psi_{\vert_{z=0}}|^2)\D_X^2 \tw - J_\pm e^{\mp X} + \dz\psi_{\vert_{z=0}} \D_{XZ}^2 \tw \Big)|\dZ \tw|^{p-2}\dZ\tw_{\vert_{Z=0}} \;dX, 
\end{aligned}$$
in which the integral term involving $\D_{XZ}^2\tw $ vanishes due to the jump condition \eqref{BC-jump-w} and the fact that it is a perfect derivative in $X$. For the other terms, we note that at $Z=0$ we have 
$$\begin{aligned}\dt \tw  - (1+|\dz\psi_{\vert_{z=0}}|^2)\D_X^2 \tw -J_\pm e^{\mp X}  &= \dt V_{KH}  - (1+|\dz\psi_{\vert_{z=0}}|^2)\D_X^2 V_{KH}  + c_\pm e^{\mp X}  
\\&= c_\pm e^{\mp X}.\end{aligned} $$
for some constant $c_\pm$; here, the last identity was due to a use of the equation for $V_{KH}$. Thus, using this and the Sobolev embedding, we have 
$$\begin{aligned}
- \int_{\RR}  (\dZ-\dz\psi_{\vert_{z=0}}\dX) & \dZ\tw |\dZ \tw|^{p-2}\dZ\tw_{\vert_{Z=0}} \;dX 
\\&\quad\lesssim\quad \| \dZ \tw_{\vert_{Z=0}}\|_{L^p_X}
\\&\quad\lesssim\quad \|\dZ \tw \|_{L^p_{XZ}} + \Big(\int_{\RR^2_+} |\dZ \tw|^{p-2} |\D_Z^2 \tw |^2 \; dZdX\Big)^{1/p}.\end{aligned}$$
Thus, applying the Young's inequality to the last term and combing all the above estimates, we obtain 
$$\begin{aligned}
\frac{d}{dt}\|\dZ\tw\|_{L^p_{XZ}}^p&+ \int_{\RR^2_+}|\dZ \tw |^{p-2} |\nabla_{X,Z}^{\psi_0} \dZ\tw|^2 \; dXdZ
\\&\lesssim\quad 1 +  \|\dZ \tw \|_{L^p_{XZ}} +  \|\dZ J_\pm\|_{L^p_Z}\|\dZ \tw\|_{L^p_{XZ}}^{p-1} .
\end{aligned}$$
This together with the $L^2$ estimate \eqref{L2-est-tw} yields the lemma at once.
\end{proof}

\bigskip
\ind
We also obtain the following $X$-derivative estimates.
\begin{lemma}\label{lem-box-layer-x} For any solutions $\tw$ to \eqref{eqs-tw}--\eqref{BC-bdry-w}, there holds
\begin{equation*}
\begin{aligned}
\frac {d}{dt} \|\dX \tw \|_{L^p_XW^{1,p}_Z}^p&+\int_{\RR^2_+}| \dX \tw |^{p-2} |\nabla_{X,Z}^{\psi_0}\dX \tw|^2 \; dXdZ+\int_{\RR^2_+}|\D_{XZ}^2\tw |^{p-2} |\nabla_{X,Z}^{\psi_0} \D_{XZ}^2\tw|^2 \; dXdZ
\\& \lesssim\quad  1+ \|J_\pm\|_{W^{1,p}_Z} ^p + \| \tw \|_{W^{1,p}_{XZ}}^p .
\end{aligned}\end{equation*}
\end{lemma}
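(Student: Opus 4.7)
The plan mirrors the proof of Lemma \ref{lem-box-layer}, applied now to $\dX\tw$ and the mixed derivative $\D_{XZ}^2\tw = \dZ\dX\tw$ (the two contributions to $\|\dX\tw\|_{L^p_XW^{1,p}_Z}^p$), with one crucial new feature: whereas $\tw$ and $\dZ\tw$ satisfied \emph{two} vanishing jump conditions at $X=0$, the function $\dX\tw$ only satisfies $[\dX\tw]_{\vert_{X=0}}=0$, while $[\dX^2\tw]_{\vert_{X=0}}$ is in general nonzero. Dealing with this extra jump will be the main obstacle.

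First I would differentiate \eqref{eqs-tw} in $X$, obtaining
$$ \dt(\dX\tw) = \Delta^{\psi_0}_{XZ}(\dX\tw) \mp J_\pm e^{-|X|}, \qquad \pm X>0, $$
then multiply by $|\dX\tw|^{p-2}\dX\tw$ and integrate over $\RR^2_+$ split into its two quadrants. Following the integration by parts used in Lemma \ref{lem-box-layer}, the resulting identity contains the desired dissipation $(p-1)\int|\dX\tw|^{p-2}|\nabla_{XZ}^{\psi_0}\dX\tw|^2\,dXdZ$, a $Z=0$ boundary term (handled exactly as before, since \eqref{BC-bdry-w} yields an explicit expression for $\dX\tw_{\vert_{Z=0}}$), and a new $X=0$ jump term. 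Because $[\dX\tw]_{\vert_{X=0}}=0$ implies $[\dZ\dX\tw]_{\vert_{X=0}}=0$, this jump collapses to
$$ \int_{\RR_+} (1+|\dz\psi_{\vert_{z=0}}|^2)\,[\dX^2\tw]_{\vert_{X=0}}\;|\dX\tw|^{p-2}\dX\tw_{\vert_{X=0}}\,dZ. $$
The key computation is then to take the jump of \eqref{eqs-tw} at $X=0$: using $[\tw]=[\dX\tw]=0$ to kill all tangential contributions, one finds
$$ (1+|\dz\psi_{\vert_{z=0}}|^2)\,[\dX^2\tw]_{\vert_{X=0}} = -(J_+-J_-). $$
Substituting and applying H\"older followed by Young, the boundary contribution is bounded by $\|J_\pm\|_{L^p_Z}^p+\|\dX\tw_{\vert_{X=0}}\|_{L^p_Z}^p$. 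The trace is controlled on each half-space $\pm X>0$ by the one-dimensional trace inequality $\|\dX\tw_{\vert_{X=0}}\|_{L^p_Z}^p\lesssim\|\dX\tw\|_{L^p_{XZ}}^p+\|\dX^2\tw\|_{L^p_{XZ}}^p$ together with the interpolation (valid for $1<p<2$)
$$ \|\dX^2\tw\|_{L^p_{XZ}}^p \;\lesssim\; \Big(\int|\dX\tw|^{p-2}|\dX^2\tw|^2\,dXdZ\Big)^{p/2}\|\dX\tw\|_{L^p_{XZ}}^{p(2-p)/2}, $$
after which the high-derivative piece is absorbed into the dissipation via Young's inequality.

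Next, for the mixed-derivative dissipation I would apply $\dZ$ to the previous equation, multiply by $|\D_{XZ}^2\tw|^{p-2}\D_{XZ}^2\tw$ and integrate. This yields the $\D_{XZ}^2$-dissipation, while the source $\mp\dZ J_\pm e^{-|X|}$ contributes the $\|J_\pm\|_{W^{1,p}_Z}^p$ term. The $X=0$ jump term now involves $[\dX\D_{XZ}^2\tw]_{\vert_{X=0}}=\dZ[\dX^2\tw]_{\vert_{X=0}}$, obtained by $Z$-differentiating the identity above, and is handled in exactly the same fashion, producing a $\|\dZ J_\pm\|_{L^p_Z}^p$ contribution. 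The $Z=0$ boundary term is dealt with by the trick already used in Lemma \ref{lem-box-layer}: one substitutes the equation to rewrite $(\dt-(1+|\dz\psi_{\vert_{z=0}}|^2)\dX^2)\tw_{\vert_{Z=0}}$ using the profile equation for $V_{KH}$, reducing the boundary contribution to an explicit exponentially decaying source of the form $c_\pm e^{\mp X}$.

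Summing the two energy identities yields the announced differential inequality. Everything apart from the nonvanishing $[\dX^2\tw]_{\vert_{X=0}}$ jump, which is the genuine new difficulty, parallels the $L^p$ energy arguments already carried out in the proof of Lemma \ref{lem-box-layer}.
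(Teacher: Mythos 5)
Your proposal is correct and follows essentially the same route as the paper, which itself argues by repeating the energy estimates of Lemma \ref{lem-box-layer} for $\dX\tw$ and $\D^2_{XZ}\tw$, with the jump $[\D_X^2\tw]_{\vert_{X=0}}$ computed from the equation \eqref{eqs-tw} in terms of $J_+-J_-$ and the interface traces absorbed into the weighted dissipation via a Sobolev/trace inequality and Young's inequality. Your identity $(1+|\dz\psi_{\vert_{z=0}}|^2)[\dX^2\tw]_{\vert_{X=0}}=-(J_+-J_-)$ is in fact the precise version of the paper's statement $[\D_X^2\tw]_{\vert_{X=0}}=J_+-J_-$, and the discrepancy (a bounded factor and a sign) is harmless for the estimate.
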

\begin{proof} The proof of this lemma follows word by word of that of the Lemma \ref{lem-box-layer}, upon noting that the jump of discontinuity of $\partial_X^2 \tw $ across $\{X=0\}$ can be computed through the equation \eqref{eqs-tw} for $\tw$ to give $[\D_X^2 \tw]_{\vert_{X=0}} = J_+ - J_-$. Note also that we may need to apply the Sobolev embedding:
$$\| \dZ \tw_{\vert_{X=0}}\|_{L^p_Z} \quad\lesssim\quad\|\dZ \tw \|_{L^p_{XZ}} + \Big(\int_{\RR^2_+} |\dZ \tw|^{p-2} |\D_{XZ}^2 \tw |^2 \; dZdX\Big)^{1/p}.$$
We thus omit the further detail of the proof of the lemma. 
\end{proof}

To conclude this subsection, we summarize our estimate for $V_b$ in the following lemma.
\begin{lemma}\label{lem-layer-Vb} There exists a unique solution $V_b$ of the problem \eqref{eqs-box1} with the boundary and jump conditions \eqref{jump-Vb} and initial data \eqref{initial-Vb}. Furthermore, for any $p>1$, there exists some positive constant $C_0$ that depends on $p$ and $T$ such that there holds
\begin{equation}\label{est-Vb-layer} \begin{aligned}
\sup_{0\le t\le T} &\|V_b(t)\|^p_{W^{1,p}_{XZ}}  + \int_0^T \int_{\RR^2_+} |\D_X^2V_b |^p \; dXdZ dt
\\&\le C_0\Big(|[v_0(x,0)]_{\vert_{x=0}}|^p+ \| [v_0(x,\cdot)]_{\vert_{x=0}} \|^p_{W^{1,p}(\RR_+)} +   \| v_{ 0} (\cdot,0)  \|^p_{W^{1,p} (\RR_\pm)}\Big).
\end{aligned}
\end{equation}
\end{lemma}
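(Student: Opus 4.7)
The plan is to read Lemma \ref{lem-layer-Vb} as an essentially immediate corollary of Lemmas \ref{lem-box-layer} and \ref{lem-box-layer-x} via the decomposition
\[
V_b = \tw \mp \tfrac12 [V_P]_{\vert_{x=0}} e^{-|X|}, \qquad \pm X>0,
\]
already set up before Lemma \ref{lem-box-layer}. Since the exponential correction is explicit and controlled in $W^{1,p}_{XZ}$ (locally uniformly in $t$) by $\|[V_P]_{\vert_{x=0}}\|_{L^\infty(0,T;W^{1,p}_Z)}$, which in turn is bounded by $|[v_0(x,0)]_{\vert_{x=0}}|$ through the jump bound \eqref{est-PrVp-jump}, all that remains is to derive the analogous $W^{1,p}_{XZ}$ estimate for $\tw$.

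To that end, I would add the energy inequalities produced by Lemma \ref{lem-box-layer} (for $\tw$ and $\dZ\tw$) and Lemma \ref{lem-box-layer-x} (for $\dX\tw$ and $\D_{XZ}^2\tw$), yielding
\[
\frac{d}{dt}\|\tw\|_{W^{1,p}_{XZ}}^p + \int_{\RR^2_+}\bigl(|\tw|^{p-2}+|\nabla_{X,Z}\tw|^{p-2}\bigr)|\nabla_{X,Z}^{\psi_0}\nabla_{X,Z}\tw|^2\,dXdZ \;\lesssim\; 1+\|J_\pm\|_{W^{1,p}_Z}^p+\|\tw\|_{W^{1,p}_{XZ}}^p.
\]
Since $\tw_{\vert t=0}=0$ by \eqref{tw-initial}, a direct Gronwall argument on $[0,T]$, combined with the integrability bound \eqref{est-Jpm}, then gives
\[
\sup_{0\le t\le T}\|\tw(t)\|_{W^{1,p}_{XZ}}^p + \int_0^T\!\!\int_{\RR^2_+} |\D_X^2\tw|^p\,dXdZ\,dt \;\lesssim\; 1+|[v_0(x,0)]_{\vert_{x=0}}|^p,
\]
where the second spacetime integral is recovered from the dissipation term $\int|\dX\tw|^{p-2}|\nabla_{X,Z}^{\psi_0}\dX\tw|^2$ after discarding the tangential derivative (and from $\int|\D_{XZ}^2\tw|^{p-2}\cdots$ for the normal part, if needed). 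Translating back to $V_b$ via the triangle inequality and bounding the explicit exponential term by \eqref{est-PrVp-jump} and \eqref{est-PrVp} produces the right-hand side of \eqref{est-Vb-layer}.

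For existence and uniqueness I would proceed by a standard Galerkin or Faedo--Galerkin scheme applied to the linear parabolic transmission problem \eqref{eqs-tw}--\eqref{BC-bdry-w} (posed separately on $\RR_+\times\RR_+$ and $\RR_-\times\RR_+$ and coupled by \eqref{BC-jump-w}): regularise the source $J_\pm$ and the boundary data by cutoffs in $Z$, solve the resulting smooth problem on each quadrant using semigroup theory for the operator $\Delta_{XZ}^{\psi_0}$ with Dirichlet data at $Z=0$ and the prescribed transmission conditions at $X=0$, and then pass to the limit using the uniform bounds above. Uniqueness follows from applying the energy estimate of Lemma \ref{lem-box-layer} to the difference of two solutions (which has vanishing source, boundary data and initial data), giving $\tw\equiv 0$.

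The main obstacle is less conceptual than bookkeeping: because $p\neq 2$ the energy framework is nonlinear in $\tw$, so one must carefully track that the boundary and jump integrals arising from integration by parts either vanish (thanks to \eqref{BC-jump-w}) or can be absorbed via trace and Young's inequalities as in the proofs of Lemmas \ref{lem-box-layer} and \ref{lem-box-layer-x}. The only subtle point is extracting $\int_0^T\!\int |\D_X^2 V_b|^p$ in \eqref{est-Vb-layer} from a dissipation term that naturally reads $\int |\dX\tw|^{p-2}|\D_X^2\tw|^2$; this is done by writing $|\D_X^2\tw|^p=(|\dX\tw|^{p-2}|\D_X^2\tw|^2)^{p/2}|\dX\tw|^{p(2-p)/2}$ and applying Hölder in $(X,Z,t)$, using the already-established $L^\infty_t W^{1,p}_{XZ}$ control on $\tw$; this is where the restriction $1<p<2$ of Theorem \ref{theo-conv} first matters at the level of exponents, but within the present lemma the inequality $p>1$ suffices.
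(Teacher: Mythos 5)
Your proposal is correct and follows essentially the same route as the paper: the same decomposition $V_b=\tw\mp\tfrac12[V_P]_{\vert_{x=0}}e^{-|X|}$, summing the energy inequalities of Lemmas \ref{lem-box-layer} and \ref{lem-box-layer-x} with \eqref{est-Jpm} and Gronwall (using $\tw_{\vert t=0}=0$), and recovering $\int_0^T\!\int|\D_X^2\tw|^p$ from the dissipation term $\int|\dX\tw|^{p-2}|\D_X^2\tw|^2$ via exactly the Young/H\"older splitting the paper uses. The extra detail you give on existence/uniqueness and on transferring the bound back to $V_b$ only fleshes out steps the paper leaves implicit.
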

\begin{proof} This is a collection of estimates from Lemmas \ref{lem-box-layer} and \ref{lem-box-layer-x}, the estimate \eqref{est-Jpm} on $J_\pm$, the jump estimate \eqref{est-PrVp-jump} from Lemma \ref{lem-Vp}, and a use of the standard Gronwall inequality. Indeed, Lemmas \ref{lem-box-layer} and \ref{lem-box-layer-x} inparticular yields
$$\sup_{0\le t \le T} \| \tw \|_{W^{1,p}_{XZ}}^p + \int_0^T\int_{\RR^2_+} |\dX \tw |^{p-2} |\D_X^2 \tw |^2 \; dXdZ dt$$
is bounded. This together with the standard Young's inequality yields that
$$\begin{aligned} \int_0^T \int_{\RR^2_+} |\D_X^2 \tw |^p \; dXdZ dt  &= \int_0^T \int_{\RR^2_+} |\dX \tw|^{\frac{2-p}{2}p} |\dX \tw|^{\frac{p-2}{2}p}|\D_X^2 \tw|^p \; dXdZ dt 
\\&\le \int_0^T \int_{\RR^2_+} |\dX \tw|^{p} \; dXdZ dt + \int_0^T \int_{\RR^2_+} |\dX \tw |^{p-2}|\D_X^2 \tw |^2 \; dXdZ dt
\end{aligned}
$$
is also bounded. The lemma is proved. 
\end{proof}

\subsection{Remainders}

We observe that, with the above profiles, the functions   $(u^\e_{app}, v^\e_{app})$ given by the formula \eqref{appx-solns} satisfy
\begin{equation}\label{plane-NSE-Approx} 
\begin{aligned}\dt u^\e_{app} &= \e \dz^2 u^\e_{app} + E^u ,
\\ \dt v^\e_{app}  + (u^\e_{app}  - u_0) \dx v^\e_{app}  &= \e  \Delta^\psi_{xz} v^\e_{app}  + E^v ,
\\ (u^\e_{app} ,v^\e_{app} )_{\vert_{z=0}} &=0 ,
\\ [ v^\e_{app}] _{\vert_{x = 0}}  &=0 \\  [\dx  v^\e_{app} ] _{\vert_{x = 0}} &=0 ,
\end{aligned}
\end{equation}
where direct computations give
$$E^u=\e \dz^2 u_0,$$
and
$$ \begin{aligned}
E^v&= \e \Delta_{xz}^\psi v_0 + (\Delta_{XZ}^\psi - \Delta_{XZ}^{\psi_0}) V_b - 2\sqrt \e\dz\psi \D_{zX}^2 V_{KH}- \sqrt \e\dz^2 \psi\dX V_{KH} + \e \dz^2V_{KH}\\
&\qquad +\e (1+|\dz\psi|^2))\dx^2 V_P- 2\sqrt\e\dz\psi \D_{xZ}^2V_P - \e \dz^2 \psi\dx V_P - U_p \dx v^\e_{app} .
\end{aligned}$$
\begin{remark}[] 
\label{notL2}
We note that $E^v$ contains  a singular term: $\frac{1}{\sqrt \e}U_{p}(t,z/\sqrt \e)\dX V_{KH}(t,x/\sqrt \e,z)$ in $U_p\dx v_{app}^\e$. This singular term is a-priori not better than bounded in $L^2$ and thus we can't obtain the convergence in the $L^2$ space this way. However, its $L^p$ norm has an order of $\e^{1/p-1/2}$, which tends to zero in $L^p$, for $1< p<2$, as $\e \to 0$. 
\end{remark}
\bigskip
\ind

\begin{proposition}\label{prop-estE} For all $p> 1$, there hold uniform estimates:
\begin{equation}\label{eqs-estE}
\begin{aligned}
\|E^u(t)\|_{L^2_z} &\quad\le\quad \e \|\dz^2 u_0 \|_{L^2_z},\qquad
\int_0^T \|E^v(t)\|^p_{L^p_{xz}} \; dt\quad\le\quad C_{in}(p,T) \e^{1-p/2},
\end{aligned}
\end{equation}
for some positive constant $C_{in}(p,T)$ that depends continuously on the initial data, the discontinuous jump of the Euler flow $v_0$, the number $p>1$, and the time $T$. More precisely, the constant $C_{in}(p,T)$ is bounded by $$ C_0\Big(|[v_0(x,0)]_{\vert_{x=0}}|^p+ \| [v_0(x,\cdot)]_{\vert_{x=0}} \|^p_{W^{1,p}(\RR_+)} +   \| v_{ 0} (\cdot,0)  \|^p_{W^{1,p} (\RR_\pm)}+ \|\Delta v_0\|_{L^p_{xz} (\RR_\pm\times \RR_+)}^p\Big)$$
for some $C_0$ that depends only on $T$ and $p$. 
\end{proposition}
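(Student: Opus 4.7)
The plan is to decompose $E^v$ into its nine constituent terms and estimate each individually, using the multi-scale structure built into $v^\e_{app}$. The bound on $E^u = \e\partial_z^2 u_0$ is immediate from $u_0 \in H^2(\RR_+)$. For $E^v$, I would first set up a scaling dictionary that will be used throughout: for a 2D fast profile $f(X,Z)$ one has $\|f(\cdot/\sqrt\e,\cdot/\sqrt\e)\|_{L^p_{xz}} = \e^{1/p}\|f\|_{L^p_{XZ}}$, for a 1D fast profile $g(Z)$ one has $\|g(\cdot/\sqrt\e)\|_{L^p_z} = \e^{1/(2p)}\|g\|_{L^p_Z}$, and each fast derivative $\partial_x$ or $\partial_z$ costs a factor $\e^{-1/2}$ when returned to slow coordinates.

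With this dictionary in hand, the term $\e\Delta^\psi_{xz}v_0$ and the six terms denoted $-2\sqrt\e\partial_z\psi\partial^2_{zX}V_{KH},\,\ldots,\,-\e\partial_z^2\psi\partial_xV_P$ in the formula for $E^v$ all carry an explicit prefactor $\sqrt\e$ or $\e$. Combined with the $W^{1,p}$-in-fast-variable bounds on $V_P$ and $V_{KH}$ from Lemmas \ref{lem-Vp} and \ref{lem-VKH}, and the $L^\infty$ control of $\partial_z\psi = t u_0'$ via the one-dimensional Sobolev embedding $H^2\hookrightarrow C^1$, each of these contributes $O(\e^\alpha)$ in $L^p_{xz}$ for some exponent $\alpha > 1/p - 1/2$, and therefore after raising to the $p$-th power and integrating in $t$ is dominated by $\e^{1-p/2}$. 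The coefficient $\partial_z^2\psi = tu_0''$, which is only $L^2$ because $u_0 \in H^2$, is paired by H\"older with the $W^{2,p}$ regularity of $v_0$, $V_P$ and $V_{KH}$. The cross correction $(\Delta^\psi_{XZ}-\Delta^{\psi_0}_{XZ})V_b$ is handled by writing $\partial_z\psi(t,z)-\partial_z\psi(t,0) = t\int_0^z u_0''(s)\,ds = O(\sqrt z)$ via Cauchy--Schwarz on $u_0''\in L^2$; passing to the slow variable yields an additional $\e^{1/4}$ gain, which combined with the $\partial_X^2 V_b$ estimate of Lemma \ref{lem-layer-Vb} makes this contribution negligible.

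The critical term, which dictates the rate, is the boundary-layer/transition-layer interaction $-U_P\partial_x v^\e_{app}$. Its most singular component is the one already flagged in Remark \ref{notL2}, namely $\e^{-1/2}U_P(t,z/\sqrt\e)(\partial_X V_{KH})(t,x/\sqrt\e,z)$. A direct change of variable in $x$ gives
$$\|U_P(\cdot/\sqrt\e)\,\partial_X V_{KH}(\cdot/\sqrt\e,\cdot)\|_{L^p_{xz}}^p = \sqrt\e \int_0^\infty |U_P(z/\sqrt\e)|^p \|\partial_X V_{KH}(\cdot,z)\|_{L^p_X}^p\,dz \lesssim \e\,\|U_P\|_{L^p_Z}^p\|\partial_X V_{KH}\|_{L^\infty_z L^p_X}^p,$$
where the $L^\infty_z L^p_X$ bound on $\partial_X V_{KH}$ is obtained by taking the sup in $z$ of the pointwise estimate \eqref{est-VKH-z} and invoking the 2D Sobolev embedding $W^{2,p}\hookrightarrow C^0$ (valid for $p>1$) on $v_0$ to control the jump $[v_0(x,\cdot)]_{\vert_{x=0}}$ in $L^\infty_z$. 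Multiplying by the $\e^{-p/2}$ produced by the fast derivative $\partial_x V_{KH} = \e^{-1/2}\partial_X V_{KH}$ gives exactly $\e^{1-p/2}$, which is the sharp rate. The remaining subterms of $-U_P\partial_x v^\e_{app}$, namely $U_P\partial_x v_0$, $U_P\partial_x V_P$, and $\e^{-1/2}U_P\partial_X V_b$, scale with strictly better powers of $\e$: the first two because no fast derivative is applied, and the third because the product is doubly localized in $(X,Z)$, so both integrations produce a factor $\sqrt\e$.

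The main obstacle beyond the careful bookkeeping of $\e$ powers is the low regularity of $\partial_z^2\psi$: since $u_0''$ is only $L^2$, pointwise bounds must be replaced by the $\sqrt z$ H\"older-type estimate via Cauchy--Schwarz, and it is exactly this $\sqrt z$ decay that yields the $\e^{1/4}$ gain for the $(\Delta^\psi-\Delta^{\psi_0})V_b$ contribution. The constant $C_{in}(p,T)$ is then read off from the sum of all the contributions, with the stated dependence on $|[v_0(x,0)]_{\vert_{x=0}}|$, $\|[v_0(x,\cdot)]_{\vert_{x=0}}\|_{W^{1,p}}$, $\|v_0(\cdot,0)\|_{W^{1,p}}$, and $\|\Delta v_0\|_{L^p}$.
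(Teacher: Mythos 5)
Your proposal follows essentially the same route as the paper: a term-by-term estimate of $E^v$ using the change-of-variables scaling (the paper's Lemma \ref{lem-Zz}) together with the profile bounds of Lemmas \ref{lem-Up}, \ref{lem-Vp}, \ref{lem-VKH}, and \ref{lem-layer-Vb}, with the singular interaction $\e^{-1/2}U_P\dX V_{KH}$ correctly identified as the term forcing the rate $\e^{1-p/2}$, and your H\"older pairing there ($L^p_Z$ on $U_P$, $L^\infty_z L^p_X$ on $\dX V_{KH}$) is exactly the one needed. One small slip: the term $\e^{-1/2}U_P\dX V_b$ is not of strictly better order --- the double change of variables gives $\e^{-p/2}\cdot\e=\e^{1-p/2}$, the same borderline rate --- but since this still matches the claimed bound the conclusion is unaffected.
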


We now give a proof of the Proposition \ref{prop-estE}. The first estimate is clear from the definition $E^u = \e \dz^2 u_0$. We prove the second estimate. We will use the following simple lemma.
\begin{lemma}\label{lem-Zz} For any reasonable function $u = u(z)$, there holds
\begin{equation*} \| u(z/\sqrt \e)\|_{L_z^2(\RR_+)} \quad\lesssim\quad \e^{1/4} \|u(Z)\|_{L_Z^2(\RR_+)}.\end{equation*}
\end{lemma}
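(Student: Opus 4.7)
The plan is to prove this by a direct change of variables in the integral defining the $L^2_z$ norm. First I would write out
\[
\|u(z/\sqrt{\e})\|_{L^2_z(\RR_+)}^2 = \int_0^{+\infty} |u(z/\sqrt{\e})|^2 \, dz,
\]
and then perform the substitution $Z := z/\sqrt{\e}$, so that $dz = \sqrt{\e}\, dZ$ and the limits of integration are preserved since $\sqrt{\e}>0$. This converts the integral into $\sqrt{\e} \int_0^{+\infty} |u(Z)|^2 \, dZ = \sqrt{\e}\, \|u\|_{L^2_Z(\RR_+)}^2$.

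Taking square roots then yields the claimed identity (in fact, an equality, not just a bound)
\[
\|u(z/\sqrt{\e})\|_{L^2_z(\RR_+)} = \e^{1/4} \|u(Z)\|_{L^2_Z(\RR_+)},
\]
which is stronger than the stated $\lesssim$ inequality. The only regularity requirement on $u$ is that $u \in L^2(\RR_+)$ so that the right-hand side is finite; this is what is meant by ``any reasonable function'' in the statement.

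There is no real obstacle here: the lemma is a pure scaling identity. It is recorded as a lemma because it will be applied repeatedly in the subsequent error analysis of Proposition~\ref{prop-estE} to convert $L^p$ norms of layer profiles written in the fast variables $X=x/\sqrt{\e}$, $Z=z/\sqrt{\e}$ back to norms in the physical variables, producing the small powers of $\e$ that drive the remainder terms to zero. An analogous identity with exponent $1/p$ (namely $\|u(z/\sqrt{\e})\|_{L^p_z} = \e^{1/(2p)}\|u\|_{L^p_Z}$) holds by the same argument for any $p\ge 1$, which is the form actually needed in the applications.
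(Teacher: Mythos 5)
Your proof is correct and is exactly the paper's argument: the paper simply states that the lemma follows "by changing of variable from $z$ to $z/\sqrt\e$," which is the substitution you carry out explicitly (and which indeed gives equality rather than just $\lesssim$).
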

\begin{proof} It is clear by changing of variable from $z$ to $z/\sqrt\e$. \end{proof}

We now check term by term in $E^v$. The term $\e \Delta^\psi_{xz} v_0$ is clear, giving the contribution of $\e \|\Delta v_0\|_{L^p_{xz} (\RR_\pm\times \RR_+)}^p$. Next, note that 
$$ \Delta^\psi_{XZ} - \Delta_{XZ}^{\psi_0} =(|\dz\psi|^2 - |\dz\psi_{\vert_{z=0}}|^2)\partial_X^2- 2(\dz\psi - \dz \psi_{\vert_{z=0}} )\partial_{XZ}^2 - \dz^2 \psi\dX.$$
Thus, the estimate \eqref{est-Vb-layer}  for $V_b$ precisely gives us the desired $L^p$ estimate for $(\Delta^\psi_{XZ} - \Delta_{XZ}^{\psi_0})V_b$, after a change of variables $(x,z)$ to $(X,Z)$ with $X = x/\sqrt \e, Z = z/\sqrt \e$, yielding a small factor of $\e$. Similarly, for all the terms:
$$- 2\sqrt \e\dz\psi \D_{zX}^2 V_{KH}- \sqrt \e\dz^2 \psi\dX V_{KH} + \e \dz^2V_{KH} + \e (1+|\dz\psi|^2))\dx^2 V_P- 2\sqrt\e\dz\psi \D_{xZ}^2V_P - \e \dz^2 \psi\dx V_P$$
the estimates from Lemmas \ref{lem-Up}, \ref{lem-Vp}, and \ref{lem-VKH} immediately yield that the $L^p$ norm of these are bounded by 
$$ C_0\e^{1/2}\Big(|[v_0(x,0)]_{\vert_{x=0}}|+ \| [v_0(x,\cdot)]_{\vert_{x=0}} \|_{W^{1,p}(\RR_+)} +   \| v_{ 0} (\cdot,0)  \|_{W^{1,p} (\RR_\pm)}\Big).$$

Finally, let us treat the term $U_p \dx v^\e_{app}$. From the definition of $v^\e_{app}$, the singular terms in $U_p \dx v^\e_{app}$ are 
$$\frac{1}{\sqrt \e} U_p(t,Z) \dX V_{KH}(t,X,z) + \frac 1{\sqrt \e}U_p(t,Z) \dX V_b (t,X,Z).$$
We then use the Lemma \ref{lem-Zz} to treat these singular terms. For example, we compute 
$$\begin{aligned}
\e^{-p/2}\int_{\RR^2_+} |U_p (t,Z) \dX V_b(t,X,Z)|^p \; dx dz &\quad\lesssim\quad \e^{1-p/2}\int_{\RR^2_+} |U_p (t,Z) \dX V_b(t,X,Z)|^p \; dXdZ 
\\& \quad\lesssim\quad \e^{1-p/2} \|U_p\|_{L^\infty_Z}^p \|\dX V_b\|_{L^p_{XZ}}^p\\&\quad\lesssim\quad \e^{1-p/2}.
\end{aligned}$$ 
Other terms are entirely similar. This completes the proof of the estimate \eqref{eqs-estE}, and thus the Proposition \ref{prop-estE}. 

\subsection{Convergence}

We are ready to prove the convergence stated in the main theorem. 

Now we consider the solutions $R^u,R^v$ of the following problem:
\begin{equation}\label{eqs-remainders} 
\begin{aligned}\dt R^u &= \e \dz^2 R^u + E^u, \\
\dt R^v + (U_p + R^u) \dx R^v &=\e\Delta^\psi_{xz} R^v -R^u\dx v^\e_{app} + E^v \\
(R^u,R^v)_{\vert_{z=0}} &=0,\\\lim_{z\to +\infty}(R^u,R^v) &= 0, \\
[R^v]_{\vert_{x=0}} = [\dx R^v]_{x=0} &=0 , \\ 
(R^u,R^v)_{\vert_{t=0}} &=0 .
\end{aligned}\end{equation}
Then the functions  $(u^\e,v^\e)$  defined by 
$$\begin{aligned} u^\e(t,z) &= u^\e_{app}(t,z) + R^u(t,z)\\
v^\e(t,x,z) &= v^\e_{app}(t,x,z) + R^v(t,x,z) ,
\end{aligned}$$
satisfy the equations \eqref{plane-NSE}-\eqref{plane-NScds2}-\eqref{nojump3}.

The  well-posedness of the problem \eqref{eqs-remainders}  follows at once from the following a-priori estimates:
\begin{lemma}\label{lem-est-Ruv} There hold
$$\begin{aligned} \frac{d}{dt} \|R^u\|_{L^2_z}^2+ \e \|\dz R^u\|^2_{L^2_z}& \quad\lesssim\quad \|R^u\|_{L^2_z}\|E^u\|_{L^2_z},
\\
\frac{d}{dt} \|R^v\|^p_{L^p_{x,z}}+\e\int_{\RR^2_+}|R^v|^{p-2}|\nabla_{x,z}^\psi R^v|^2 \; dxdz 
&\quad\lesssim\quad\Big(\|R^u \dx v^\e_{app}\|_{L^p_{xz}}+ \|E^v\|_{L^p_{xz}}\Big) \|R^v\|_{L^p_{xz}}^{p-1},
\end{aligned}$$
where  $\nabla^\psi_{x,z}:=(\dx,\dz-\dz\psi\dx)$. 
\end{lemma}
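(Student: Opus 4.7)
The plan is to establish both inequalities by standard $L^2$ and weighted $L^p$ energy methods, carefully exploiting the homogeneous condition at $\{z=0\}$ and the pair of matching conditions at $\{x=0\}$ to ensure that every interior boundary contribution cancels. For the first estimate I would multiply the equation for $R^u$ by $R^u$, integrate over $\RR_+$ in $z$, and integrate by parts the diffusion, which produces the dissipation $-\e\|\dz R^u\|_{L^2_z}^2$ (the boundary contribution at $z=0$ vanishes thanks to $R^u_{\vert_{z=0}}=0$ and the one at $+\infty$ by decay). A Cauchy--Schwarz bound on $\int R^u E^u\, dz$ then yields the stated inequality up to a harmless factor of $2$ absorbed in $\lesssim$.

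For the second estimate, I would multiply the equation for $R^v$ by $|R^v|^{p-2}R^v$ and integrate over each quadrant $\RR_\pm\times \RR_+$ separately, then sum. The time-derivative term supplies $\frac{1}{p}\frac{d}{dt}\|R^v\|^p_{L^p_{xz}}$. For the transport term, since $|R^v|^{p-2}R^v\,\dx R^v=\frac{1}{p}\dx|R^v|^p$ and $U_p+R^u$ depends only on $(t,z)$, integration in $x$ reduces it to boundary contributions at $x=\pm\infty$ (killed by decay) and at $x=0^\pm$ (canceling because the continuity $[R^v]_{\vert_{x=0}}=0$ forces $|R^v(0^+,z)|^p=|R^v(0^-,z)|^p$). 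The diffusion term is treated exactly as in the proof of Lemma \ref{lem-box-layer}: writing $\Delta^\psi_{xz}=\dx^2+(\dz-\dz\psi\,\dx)^2$ and integrating by parts once produces the nonnegative dissipation $(p-1)\e\int_{\RR^2_+} |R^v|^{p-2}|\nabla^\psi_{x,z}R^v|^2\,dxdz$, while the boundary contributions at $z=0$ disappear by $R^v_{\vert_{z=0}}=0$, those at $z=+\infty$ and $x=\pm\infty$ by decay, and those at $x=0$ cancel by virtue of the two jump conditions $[R^v]_{\vert_{x=0}}=[\dx R^v]_{\vert_{x=0}}=0$. Finally, a H\"older inequality in $L^p\times L^{p/(p-1)}$ controls the two source contributions $\int(-R^u\dx v^\e_{app}+E^v)|R^v|^{p-2}R^v\,dxdz$ by $(\|R^u\dx v^\e_{app}\|_{L^p_{xz}}+\|E^v\|_{L^p_{xz}})\|R^v\|^{p-1}_{L^p_{xz}}$. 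Combining and multiplying through by $p$ yields the announced inequality, the $p$-dependent constants being absorbed in $\lesssim$.

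The main obstacle, though really a matter of careful bookkeeping rather than a genuine analytic difficulty, lies in verifying that the sum of interior boundary terms at $\{x=0\}$ generated by the full anisotropic operator $\Delta^\psi_{xz}$ vanishes: expanding $(\dz-\dz\psi\dx)^2$ produces both a pure $\dx$-derivative contribution and a skew term of the form $\dz\psi\,(\dz-\dz\psi\dx)R^v$ at $x=0$, so cancellation demands both jump conditions $[R^v]_{\vert_{x=0}}=0$ and $[\dx R^v]_{\vert_{x=0}}=0$ acting simultaneously. This is precisely the mechanism already successfully used in the proof of Lemma \ref{lem-box-layer}, and no new ingredient beyond standard weighted $L^p$ parabolic energy estimates is required.
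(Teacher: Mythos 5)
Your proposal is correct and follows essentially the same route as the paper's proof: multiplying by $R^u$ and $|R^v|^{p-2}R^v$, using that $U_p+R^u$ is $x$-independent so the transport term is a perfect $x$-derivative, integrating the diffusion by parts with all boundary and interface contributions canceling via $R^v_{\vert_{z=0}}=0$ and $[R^v]_{\vert_{x=0}}=[\dx R^v]_{\vert_{x=0}}=0$, and closing with H\"older. The only cosmetic difference is that you keep the factor $(p-1)$ in the dissipation, which the paper absorbs into the $\lesssim$.
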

\begin{proof} Multiply by $R^u$ and $|R^v|^{p-2}R^v$  the respective equations in \eqref{eqs-remainders} and integrate the resulting equations over $\RR_+$ or $\RR^2_+$. The claimed estimate for $R^u$ is straightforward. For the $R^v$ estimate, we have 
$$\begin{aligned}\frac 1p\frac{d}{dt} \int_{\RR_+^2}|R^v|^p &+ \int_{\RR_+^2}\Big( U_p+ R^u)  |R^{v}|^{p-2}\dx R^v R^v\;dxdz\\
&= \int_{\RR_+^2}\Big(\e\Delta^\psi_{xz} R^v -R^u\dx v^\e_{app} - E^v\Big)|R^v|^{p-2}R^v \; dxdz.\end{aligned}$$
We first note that integration by parts yields
$$ \begin{aligned}\int_{\RR^2_+}  (U_p + R^u) |R^v|^{p-2} \dx R^v R^v  \;dxdz&= \int_{\RR^2_+}  (U_p + R^u)  \dx \Big(\frac{|R^v|^p}{p}\Big)  \;dxdz = 0
\end{aligned}$$
and 
$$\begin{aligned}  \e\int_{\RR^2_+}\Delta^\psi_{xz} R^v |R^v|^{p-2}R^v \;dxdz &= - \e\int_{\RR^2_+}|R^v|^{p-2}|\nabla_{x,z}^\psi R^v|^2 \; dxdz, 
\end{aligned}$$
upon noting that there is no contribution on the boundary and the interface due to the vanishing boundary and jump conditions for $R^v$.
 Finally, the standard H\"older inequality yields 
$$ \begin{aligned} 
\int_{\RR_+^2} (|R^u \dx v^\e_{app}|+ |E^v|)|R^v|^{p-1} \; dxdz\quad\lesssim\quad \Big(\|R^u \dx v^\e_{app}\|_{L^p_{xz}}+ \|E^v\|_{L^p_{xz}}\Big) \|R^v\|_{L^p_{xz}}^{p-1}.
\end{aligned}$$

\ind
Collecting these estimates together proves the lemma.
\end{proof}

\bigskip
\ind

It is straightforward to verify that 
\begin{equation*}\begin{aligned}
(u^\e_{app} (t,z) , v^\e_{app} (t,x,z) ) & \to \vv^0 \qquad \mbox{in } L^\infty(0,T;  L^{2} (\RR_+)   \times L^{p} ( \RR \times \RR_+)) ,
\end{aligned}
\end{equation*} 
as $\e \to 0$, upon using the estimates on the profiles from Lemmas \ref{lem-Up}, \ref{lem-Vp}, \ref{lem-VKH}, and \ref{lem-layer-Vb}, and the Lemma \ref{lem-Zz}.
Therefore in order to prove Theorem \ref{theo-conv}, it remains to prove that 
\begin{equation}\label{eqs-convApp}\begin{aligned}
(R^u (t,z) , R^v (t,x,z) ) & \to 0 \qquad \mbox{in } L^\infty(0,T;  L^{2} (\RR_+)   \times L^{p} ( \RR \times \RR_+)) ,
\end{aligned}
\end{equation} 
as $\e \to 0$.

From Lemma \ref{lem-est-Ruv} and Proposition \ref{prop-estE}, by the standard ODE estimate and the Gronwall inequality, we immediately obtain uniform bounds
\begin{equation*}\|R^u\|_{L^\infty_tL^2_z} + \e^{1/2} \|\dz R^u\|_{L^2_tL^2_z} \quad\lesssim\quad \e,\end{equation*}
with noting that $\|E^u\|_{L^2_tL^2_z} \lesssim \e$ and $R^u_{\vert_{t=0}}=0$. In addition, this estimate yields
$$\int_0^T \|R^u(t,\cdot)\|_{L^\infty_z}^4 dt \quad\lesssim\quad \|R^u\|^2_{L^\infty_t L^2_z} \|\dz R^u\|_{L^2_tL^2_z}^2\quad\lesssim\quad \e^3.$$
This together with the bound $\|\dx v^\e_{app}\|^p_{L^\infty_tL^p_{x,z}}\lesssim 1$, which again follows from the estimates on the profiles, yields
$$ \int_0^T \|R^u\dx v^\e_{app}\|^p_{L^p_{x,z}} \quad\lesssim\quad \|\dx v^\e_{app}\|^p_{L^\infty_tL^p_{x,z}}  \int_0^T \|R^u \|_{L^\infty_z}^p \;dt\lesssim \e^{3p/4} .$$
In addition, the second estimate from Lemma \ref{lem-est-Ruv} implies
$$\frac{d}{dt} \|R^v\|^p_{L^p_{xz}}  \quad\lesssim\quad  \|R^v\|_{L^p_{xz}}^{p} + \Big(\|R^u \dx v^\e_{app}\|^p_{L^p_{xz}}+ \|E^v\|^p_{L^p_{xz}}\Big),$$
which gives
$$ \|R^v(t)\|^p_{L^p_{xz}}  \quad\lesssim\quad \int_0^t \|R^v(s)\|_{L^p_{xz}}^{p} ds + \Big(\e^{3/4}+ \e^{1-p/2}\Big).$$
The Gronwall inequality then yields
\begin{equation*} \|R^v\|^p_{L^\infty_tL^p_{xz}}  \quad\lesssim\quad \e^{3/4}+ \e^{1-p/2},\end{equation*}
which tends to zero as $\e\to 0$, for $p<2$. 

This ends the proof of the convergence \eqref{eqs-convApp}, and thus of  Theorem \ref{theo-conv}.

\bigskip
\ind
{\bf Acknowledgements.} The research of T. N. was supported in part by the Foundation Sciences Math\'ematiques de Paris through a 2009-2010 post-doctoral fellowship and the National Science Foundation through the grant DMS-1108821.
\bigskip

The second author was partially supported by  the Lefschetz Center for Dynamical Systems at Brown University and the Agence Nationale de la Recherche, Project CISIFS,  grant ANR-09-BLAN-0213-02. He also warmly thanks  the Division of Applied Mathematics  at Brown University for their kind hospitality during his visit in April $2011$.

\end{document}